\pdfoutput=1
\documentclass[12pt,a4paper]{amsart}
\usepackage[a4paper, left=28mm, right=28mm, top=28mm, bottom=34mm]{geometry}

\usepackage{hyperref}
\hypersetup{nesting=true,debug=true,naturalnames=true}
\usepackage{graphicx,amssymb,upref}
\newcommand{\mrev}[1]{\href{http://www.ams.org/mathscinet-getitem?mr=#1}{MR#1}}
\newcommand{\zbl}[1]{\href{http://www.emis.de/cgi-bin/MATH-item?#1}{Zbl #1}}
\newcommand{\jfm}[1]{\href{http://www.emis.de/cgi-bin/MATH-item?#1}{JFM #1}}

\let\<\langle
\let\>\rangle

\let\uml\"

\usepackage{bm}
\usepackage{mathtools}
\usepackage{longtable}

\newtheorem{lem}{Lemma}
\newtheorem{prop}[lem]{Proposition}
\newtheorem{thm}[lem]{Theorem}
\newtheorem{cor}[lem]{Corollary}
\newtheorem{claim}[lem]{Claim}

\numberwithin{lem}{section}

\theoremstyle{definition}
\newtheorem{rem}[lem]{Remark}

\DeclareMathOperator{\Per}{Per}
\DeclareMathOperator{\ord}{ord}

\newcommand{\C}{\mathbb{C}}
\newcommand{\F}{\mathbb{F}}
\newcommand{\Q}{\mathbb{Q}}
\newcommand{\Z}{\mathbb{Z}}

\newcommand{\Jac}{\mathrm{Jac}}

\newcommand{\seq}[1]{\bm{#1}}

\numberwithin{equation}{section}

\title[Periods of sequences associated with division polynomials]{Periods modulo $p$ of integer sequences associated with division polynomials of genus $2$ curves}

\author[Y.\ Ishitsuka]{Yasuhiro Ishitsuka}
\address{
Institute of Mathematics for Industry, Kyushu University,
Fukuoka, 819-0395, Japan}
\email{yishi1093@gmail.com}

\author[T.\ Ito]{Tetsushi Ito}
\address{Department of Mathematics, Faculty of Science, Kyoto University, Kyoto 606-8502, Japan}
\email{tetsushi@math.kyoto-u.ac.jp}

\author[T.\ Ohshita]{Tatsuya Ohshita}
\address{Department of Mathematics, Cooperative Faculty of Education, Gunma University,
Maebashi, Gunma 371-8510, Japan}
\email{ohshita@gunma-u.ac.jp}

\author[T.\ Taniguchi]{Takashi Taniguchi}
\address{Department of Mathematics, Graduate School of Science, Kobe University, Kobe 657-8501, Japan}
\email{tani@math.kobe-u.ac.jp}

\author[Y.\ Uchida]{Yukihiro Uchida}
\address{Department of Mathematical Sciences, Graduate School of Science, Tokyo Metropolitan University, 1-1 Minami-Osawa, Hachioji, Tokyo 192-0397, Japan}
\email{yuchida@tmu.ac.jp}

\subjclass[2020]{Primary 11G30; Secondary 11B50, 14H25}
\keywords{Curves of genus 2, Jacobian, Division polynomials}

\begin{document}

\begin{abstract}
We study an integer sequence associated with Cantor's division polynomials of a genus 2 curve having an integral point.
We show that the reduction modulo $p$ of such a sequence is periodic for all but finitely many primes $p$, and describe the relation between the period of the reduction modulo $p$ of the sequence and the order of the integral point on the reduction modulo $p$ in the Jacobian variety explicitly.
This generalizes Ward's results on elliptic divisibility sequences associated with division polynomials of elliptic curves.
\end{abstract}

\maketitle
\setcounter{tocdepth}{1}
\tableofcontents

\section{Introduction}
An integer sequence $\{ a_n \}_{n\in\mathbb{Z}}$ is called a \emph{divisibility sequence} if $a_m \mid a_n$ whenever $m \mid n$.
An \emph{elliptic divisibility sequence} is a divisibility sequence $\seq{W} \coloneqq \{ W_n \}_{n\in\mathbb{Z}}$ satisfying
\[
	W_{n+m} W_{n-m}
	= W_{n+1} W_{n-1} W_m^2 - W_{m+1} W_{m-1} W_n^2
\]
for all integers $m, n \in \Z$.
Elliptic divisibility sequences were introduced by Ward~\cite{Ward}.
Ward proved that for an arbitrary ``non-degenerate'' elliptic divisibility sequence $\seq{W}$, there exist an elliptic curve $E$ defined over $\Q$ and $P=(x_P,y_P) \in E(\Q)$ such that $\psi_n(x_P,y_P)=W_n$,
where $\psi_n(X,Y) \in \Q[X,Y]$ is the $n$-th division polynomial of $E$.
Using them, he also proved that the reduction modulo $p$ of the sequence $\seq{W}$ is periodic for all but finitely many primes $p$.
More precisely, he proved the following:
Let $\Per_p(\seq{W})$ be the period of
the reduction modulo $p$ of the sequence $\seq{W}$.
Let $\ord_p({P})$ be the order of the point $\overline{P} \in E(\F_p)$,
where $\overline{P}$ is the reduction of $P$ modulo $p$.
Then
$\ord_p({P})$ divides $\Per_p(\seq{W})$,
and $\Per_p(\seq{W})$ divides $(p - 1) \ord_p({P})$, i.e.
\[
    \ord_p({P}) \mid \Per_p(\seq{W}) \mid (p - 1) \ord_p({P})
\]
(see \cite[Theorem 10.1]{Ward}).

The aim of this paper is to generalize these results to genus $2$ curves with integral points.
In order to state our results,
let us introduce some notation.
Let $C$ be a hyperelliptic curve of genus $2$ over $\Q$ defined by
\[
	Y^2 = F(X) \coloneqq X^5 + a_4 X^4 + a_3 X^3 + a_2 X^2 + a_1 X + a_0,
\]
where $a_0,a_1,a_2,a_3,a_4\in \Z$.
Let $\mathrm{disc}(F) \in \Z$ be the discriminant of $F(X)$,
and $\Jac(C)$ be the Jacobian variety of $C$.
For an integer $n \geq 0$, let $\psi_n(X) \in \Z[X]$ be
the division polynomial of $C$ defined by Cantor \cite{Cantor}.
Let $P = (x_P,y_P) \ (x_P, y_P \in \Z)$ be an integral point on $C \backslash \{ \infty \}$.
We put
\[
  D_P \coloneqq [P] - [\infty] \in \Jac(C)(\Q)
  \qquad \text{and} \qquad
  c_n \coloneqq \psi_n(x_P) \in \Z.
\]

The main results of this paper are as follows.

\begin{thm}
\label{MainTheorem}
Let $\seq{c} \coloneqq \{ c_n \}_{n\in\mathbb{Z}} \coloneqq \{ \psi_n(x_P) \}_{n\in\mathbb{Z}}$ be the integer sequence associated with the division polynomials of a hyperelliptic curve $C$ and its integral point $P$ on $C \backslash \{ \infty \}$ defined as above.
Assume that $c_3 c_4 c_5 c_6 c_7 (c_4^3 - c_3^3 c_5) \neq 0$.
Let $p$ be an odd prime
which divides neither
$\mathrm{disc}(F)$ nor $c_3 c_4 c_5 c_6 c_7 (c_4^3 - c_3^3 c_5)$.
Then the following assertions hold.
\begin{enumerate}
\item The reduction modulo $p$ of the sequence $\seq{c}$ is periodic.
\item Let $\Per_p(\seq{c})$ be the period of
the reduction modulo $p$ of the sequence $\seq{c}$.
Let $\overline{D_P} \in \Jac(C)(\F_p)$ be the reduction modulo $p$ of $D_P$,
and $\ord_p(D_P)$ be the order of the point $\overline{D_P} \in \Jac(C)(\F_p)$.
Then 
$\ord_p(D_P)$ divides $\Per_p(\seq{c})$,
and $\Per_p(\seq{c})$ divides $(p-1) \ord_p(D_P)$, i.e.
    \[
		\ord_p(D_P) \mid \Per_p(\seq{c}) \mid (p-1) \ord_p(D_P).
    \]
\end{enumerate}
\end{thm}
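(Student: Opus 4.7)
The plan is to imitate Ward's strategy for elliptic divisibility sequences, but replacing the elliptic division polynomial and the group $E(\F_p)$ by Cantor's genus-$2$ division polynomial and the group $\Jac(C)(\F_p)$. First I would establish assertion (1), periodicity. The paper will presumably have exhibited a nonlinear recursion (or a small family of recursions of Somos type) satisfied by the $\psi_n$, expressing $c_n$ algebraically in terms of a bounded window of previous values. Under the running hypothesis that $c_3 c_4 c_5 c_6 c_7(c_4^3 - c_3^3 c_5)$ is a unit modulo $p$, the leading ``denominators'' appearing in the recursion are invertible in $\F_p$, so the recursion can be iterated both forward and backward. The reduced sequence is then a bi-infinite deterministic orbit in a finite state space, hence periodic.

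For the left divisibility in (2) I would first isolate the vanishing criterion
\[
\overline{c_n} \equiv 0 \pmod{p} \iff n\,\overline{D_P} = 0 \text{ in } \Jac(C)(\F_p),
\]
which reflects the construction of $\psi_n$ as (essentially) a function on $\Jac(C)$ vanishing precisely on the $n$-torsion locus meeting the image of $P \mapsto [P]-[\infty]$. Combined with $c_0 = \psi_0(x_P) = 0$ and the periodicity identity $\overline{c_{\Per_p(\seq{c})}} = \overline{c_0} = 0$, this immediately yields $\ord_p(D_P) \mid \Per_p(\seq{c})$.

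For the reverse divisibility $\Per_p(\seq{c}) \mid (p-1)\ord_p(D_P)$, my plan is to prove a \emph{quasi-periodicity formula} of the shape
\[
\overline{c_{n+m}} \;=\; \alpha\,\beta^{\,n}\,\gamma^{\,n^2}\,\overline{c_n} \qquad (n \in \Z),
\]
where $m = \ord_p(D_P)$ and $\alpha,\beta,\gamma \in \F_p^\times$ depend only on $\overline{D_P}$. In Ward's case such a formula follows from the Weierstrass sigma-function product expansion of the elliptic division polynomial; the analogous input in genus $2$ should be the Klein (or Baker) sigma function in two variables, which already underlies Cantor's definition of $\psi_n$. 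Granting the formula, iterating it $p-1$ times and invoking Fermat's little theorem on $\alpha$, $\beta$, $\gamma$ makes the accumulated multiplicative factor trivial in $\F_p$, so $(p-1)m$ is a period, whence $\Per_p(\seq{c}) \mid (p-1)m$.

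The main obstacle will be this quasi-periodicity formula: both justifying the precise shape of the exponents (linear, and possibly quadratic, in $n$) and checking that the scalars $\alpha,\beta,\gamma$ are indeed units in $\F_p$ under the stated exclusion of primes. Either a transcendental route via Klein sigma-functions or a purely algebraic derivation by induction from Cantor's addition-doubling formulas should work, but having \emph{two} independent multipliers rather than one, and keeping the exponents under control during the iteration, is where the genus-$2$ case is genuinely more delicate than Ward's elliptic setting.
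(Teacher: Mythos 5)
Your high-level strategy does coincide with the paper's (Somos-type recursions coming from the genus-$2$ sigma function, a quasi-periodicity formula across one period of $\overline{D_P}$, iteration $p-1$ times plus Fermat's little theorem), but two of your steps are concretely wrong or incomplete. The most serious is the vanishing criterion you isolate for the left divisibility: in genus $2$ it is \emph{false} that $c_n\equiv 0\pmod p$ iff $n\overline{D_P}=0$. Cantor's $\psi_n$ vanishes at $x_P$ precisely when $n\overline{D_P}$ lies on the theta divisor $\Theta\subset\Jac(C)$ (the image of the curve), and since $\Theta$ is a curve inside a surface there are in general many $n$ with $c_n\equiv 0$ but $n\overline{D_P}\neq 0$. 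Hence $\overline{c_{s}}=\overline{c_0}=0$ for $s=\Per_p(\seq{c})$ only gives $s\overline{D_P}\in\Theta$, not $s\overline{D_P}=0$. The correct criterion (Theorem \ref{thm:Theta}\,(2)) is that $n\overline{D_P}=0$ iff $c_{n-1}\equiv c_n\equiv c_{n+1}\equiv 0$, i.e.\ \emph{three consecutive} vanishings; the left divisibility is then rescued because $c_{-1}=c_0=c_1=0$ forces $c_{s-1}\equiv c_s\equiv c_{s+1}\equiv 0$. The same confusion would also derail your quasi-periodicity step if you tried to read off the zero set of $\overline{c_n}$ from the torsion order.

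The second gap is in the claim that the recursions can be iterated deterministically (and, likewise, in the induction you would need for the quasi-period formula). Solving the Somos-$8$ relation for $c_{n+4}$ requires dividing by $c_4\,c_{n-4}$, and $c_{n-4}$ can vanish mod $p$ whenever $(n-4)\overline{D_P}\in\Theta$; the hypothesis that $p\nmid c_3c_4c_5c_6c_7(c_4^3-c_3^3c_5)$ controls only the fixed coefficients, not these moving factors. The missing ingredients are Lemma \ref{lem:consecutive4values} (at most three consecutive terms of the reduced sequence vanish) together with the simultaneous use of all four relations of Somos $8$, $9$, $10$, $11$ type, choosing at each step the one whose pivot $c_{m+i}$ is a unit. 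Finally, a smaller point: the correct quasi-period formula is $c_{kr+n}\equiv\alpha_p^{kn}\beta_p^{k^2}c_n$ with the quadratic exponent in the number $k$ of shifts (and one needs the compatibility $\alpha_p^{r}=\beta_p^{2}$ to iterate it); with your proposed $\gamma^{n^2}$ factor the $\beta$-exponent accumulated after $p-1$ iterations is $(p-1)n+m(p-1)(p-2)/2$, which is only guaranteed to be a multiple of $(p-1)/2$, so Fermat alone would give a period dividing $2(p-1)\ord_p(D_P)$ rather than $(p-1)\ord_p(D_P)$.
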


Since
$|\Jac(C)(\F_p)| \leq (1 + \sqrt{p})^4$
by the Hasse--Weil bound
(see \cite[Theorem 19.1, (b) and (c)]{Milne}),
we obtain the following upper bound of $\Per_p(\seq{c})$.

\begin{cor}
\label{Corollary:MainTheorem}
The period $\Per_p(\seq{c})$ of the reduction modulo $p$ of the sequence $\seq{c}$
is bounded above by $(p-1)(1 + \sqrt{p})^4$.
\end{cor}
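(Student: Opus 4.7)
The plan is to combine Theorem \ref{MainTheorem}(2) with the Hasse--Weil bound in a purely formal way; no new genuine mathematical content is required, so I expect no real obstacle, only a short chain of inequalities.

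First, I would invoke Theorem \ref{MainTheorem}(2), which gives the divisibility $\Per_p(\seq{c}) \mid (p-1)\ord_p(D_P)$. In particular, $\Per_p(\seq{c}) \leq (p-1)\ord_p(D_P)$, so it suffices to bound $\ord_p(D_P)$ from above.

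Next, by Lagrange's theorem applied to the finite abelian group $\Jac(C)(\F_p)$, the order $\ord_p(D_P)$ of the element $\overline{D_P}$ divides the group order $|\Jac(C)(\F_p)|$, and in particular $\ord_p(D_P) \leq |\Jac(C)(\F_p)|$. The Hasse--Weil bound for a smooth projective curve of genus $g=2$ over $\F_p$, cited from \cite[Theorem 19.1]{Milne}, then gives $|\Jac(C)(\F_p)| \leq (1+\sqrt{p})^{2g} = (1+\sqrt{p})^4$. (One has to note in passing that the hypotheses of Theorem \ref{MainTheorem} — $p$ odd, $p \nmid \mathrm{disc}(F)$, etc. — guarantee that $C$ has good reduction at $p$, so that $\Jac(C)(\F_p)$ makes sense as the $\F_p$-points of an abelian surface and the Hasse--Weil bound applies.)

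Combining the two inequalities yields
\[
    \Per_p(\seq{c}) \leq (p-1)\ord_p(D_P) \leq (p-1)(1+\sqrt{p})^4,
\]
which is the asserted bound. Since every step is either a direct citation or an immediate application of Lagrange's theorem, there is no substantive obstacle; the corollary is essentially a bookkeeping statement that records the numerical content of Theorem \ref{MainTheorem}.
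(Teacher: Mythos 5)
Your proposal is correct and follows essentially the same route as the paper: divisibility from Theorem \ref{MainTheorem}(2) gives $\Per_p(\seq{c}) \leq (p-1)\ord_p(D_P)$, and Lagrange's theorem plus the Hasse--Weil bound $|\Jac(C)(\F_p)| \leq (1+\sqrt{p})^4$ (Milne, Theorem 19.1) finishes the estimate. The only cosmetic point is that the bound $(1+\sqrt{p})^{2g}$ is the Weil bound for the abelian surface $\Jac(C)$ over $\F_p$ (via the Frobenius eigenvalues), not the point-count bound for the curve $C$ itself, but this does not affect the argument.
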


Theorem \ref{MainTheorem} (2) means that the ratio $\Per_p(\seq{c})/\ord_p(D_P)$
is an integer and a divisor of $p-1$.
The method in this paper in fact allows us to give an
explicit description of this ratio,
which is an analogue of Ward's result for elliptic divisibility sequences \cite[Theorem~10.1]{Ward}.
As a precise version of Theorem \ref{MainTheorem} (2),
we prove the following.
\begin{thm}
\label{MainTheorem2}
Under the assumptions in Theorem~\ref{MainTheorem},
let $r\coloneqq \ord_p(D_P)$ be the order of $\overline{D_P} \in \Jac(C)(\F_p)$, and $\alpha_p, \beta_p \in \F_p$ be elements satisfying
$\alpha_p \equiv c_{r+3}/(c_3 c_{r+2}) \pmod{p}$ and
$\beta_p \equiv (c_3^2 c_{r+2}^3)/c_{r+3}^2 \pmod{p}$,
where we know $c_{r+2}, c_{r+3} \not\equiv 0 \pmod{p}$ (see Claim \ref{claim:r+2,r+3}).
Let $d$ be the least positive integer such that
$\alpha_p^d \equiv \beta_p^{d^2} \equiv 1 \pmod{p}$.
Then we have
\[ \Per_p(\seq{c}) = d \ord_p(D_P). \]
\end{thm}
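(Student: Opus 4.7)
My plan is to establish two mod-$p$ identities for $\seq{c}$---a ``shift-by-$r$'' transformation formula and a compatibility relation---and then combine them to identify $\Per_p(\seq{c})/\ord_p(D_P)$ with the integer $d$ of the theorem. By Theorem~\ref{MainTheorem}(2), the period equals $k r$ for some positive integer $k$ dividing $p-1$, where $r = \ord_p(D_P)$; the content of the theorem is the identification $k = d$.

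The first substantive step is to prove the shift formula
\[
c_{n+r} \equiv \beta_p\, \alpha_p^n\, c_n \pmod{p} \qquad \text{(A)}
\]
for every $n$ with $c_n \not\equiv 0 \pmod{p}$. This is the genus-$2$ analogue of Ward's transformation identity and should follow from Cantor's addition and duplication relations for the division polynomials $\psi_n$, or equivalently from the quasi-periodicity of the Kleinian $\sigma$-function on $\Jac(C)$ along the line $n \mapsto n D_P$, using the hypothesis $r D_P = 0$ in $\Jac(C)(\F_p)$. Evaluating (A) at $n = 2, 3$ (using Cantor's normalization $c_2 \equiv 1 \pmod p$) recovers $\alpha_p = c_{r+3}/(c_3 c_{r+2})$ and $\beta_p = c_3^2 c_{r+2}^3/c_{r+3}^2$; the invertibility of $c_{r+2}, c_{r+3}$ modulo $p$ (Claim~\ref{claim:r+2,r+3}) ensures that these parameters are well defined.

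The second substantive step is the compatibility relation
\[
\alpha_p^r \equiv \beta_p^2 \pmod{p}, \qquad \text{(B)}
\]
which reflects the two-dimensional polarization structure on $\Jac(C)$: in the $\sigma$-function picture, the linear slope $\alpha_p$ and the constant factor $\beta_p$ of the quasi-periodicity along $n D_P$ are coupled through the intersection pairing. I expect (B) to follow from another specialization of Cantor's identities, or directly from the transformation law of the Kleinian $\sigma$-function under lattice shifts. With (A) and (B) in hand, iterating (A) yields
\[
c_{n+kr} \equiv \beta_p^k\, \alpha_p^{kn + r k(k-1)/2}\, c_n \pmod p,
\]
and the periodicity condition $c_{n+kr} \equiv c_n$ for all valid $n$ splits, by reading off the $n$-dependent and constant factors, into $\alpha_p^k \equiv 1$ and $\beta_p^k\, \alpha_p^{rk(k-1)/2} \equiv 1$. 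Rewriting $\alpha_p^{rk(k-1)/2} = (\alpha_p^r)^{k(k-1)/2} = \beta_p^{k(k-1)}$ using (B) converts the second condition into $\beta_p^{k^2} \equiv 1$. Thus the period condition is precisely $\alpha_p^k \equiv \beta_p^{k^2} \equiv 1 \pmod p$, so the least such $k$ is $d$, and $\Per_p(\seq{c}) = d\,\ord_p(D_P)$.

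The main obstacle is the joint derivation of (A) and (B). The shift formula (A) is the technical heart---it requires a careful analysis of Cantor's multi-term division polynomial recurrences modulo $p$ exploiting $c_r \equiv 0$---while the compatibility relation (B), rooted in the two-dimensional nature of $\Jac(C)$, is what ultimately produces the quadratic exponent $d^2$ on $\beta_p$ (as opposed to the linear $d$ that (A) alone would yield); getting its precise formulation is the second delicate point.
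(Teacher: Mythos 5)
Your proposal follows essentially the same route as the paper: your shift formula (A) and compatibility relation (B) are exactly parts (2) (with $k=1$) and (3) of Theorem~\ref{thm:period_psi3}, which the paper proves by induction using the Somos $8$--$11$ recurrences of Corollary~\ref{cor:Somos8-11} (your first suggested route; the $\sigma$-function quasi-periodicity cannot be invoked directly, since $rD_P=0$ holds only in $\Jac(C)(\F_p)$, so one must first derive the recurrences over $\C$ and then reduce), and your concluding argument --- iterate (A), use (B) to convert the exponent $kn+rk(k-1)/2$ into $\alpha_p^{kn}\beta_p^{k^2}$, evaluate at $n=2,3$, and invoke minimality of $d$ --- is precisely the paper's proof of Theorem~\ref{MainTheorem2}. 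The only correction worth recording is that (B) needs no appeal to the polarization or intersection pairing: it follows at once from (A) applied at $n=2$ and $n=-r-2$ together with the oddness $c_{-n}=-c_n$.
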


For a given sequence $\seq c$, the behavior of
$d=\ord_p(D_P)/\Per_p(\seq{c})$
as a divisor of $p-1$, in varying $p$, does
not seem to have an obvious pattern. It might
thus be interesting to seek the behavior
from, e.g., a statistical point of view
(see Remark \ref{rem:d-statistic}).

\begin{rem}
The order $r = \ord_p(D_P)$
 can be calculated as the least positive integer $r$ such that
$c_{r-1} \equiv c_r \equiv c_{r+1} \equiv 0 \pmod{p}$
(see Theorem~\ref{thm:Theta} (2)).
\end{rem}

\begin{rem}\label{rem:piegonhole}
The condition
$c_3 c_4 c_5 c_6 c_7 (c_4^3 - c_3^3 c_5) \neq 0$
in Theorem \ref{MainTheorem} seems technical.
We need to assume it in order to prove properties of
the reduction modulo $p$ of the sequence $\seq{c}$ by induction
(see the proof of Theorem \ref{thm:period_psi3}).
In fact, under a weaker assumption,
we can prove the periodicity of the reduction modulo $p$ of the sequence $\seq{c}$
by the pigeonhole principle.
We demonstrate it in Proposition \ref{prop:pigeonhole}.
Meanwhile, the upper bound of $\Per_p(\seq{c})$ obtained by the pigeonhole principle
is $p^{11}$, which is (much) larger than the upper bound obtained in Corollary \ref{Corollary:MainTheorem}.
\end{rem}

Although Theorem~\ref{MainTheorem} and 
Theorem~\ref{MainTheorem2}
are analogous to Ward's results for elliptic divisibility sequences, the proofs are quite different.
Ward's proof does not seem applicable to our case.
Our proofs of
Theorem~\ref{MainTheorem} and 
Theorem~\ref{MainTheorem2}
are similar to the proofs for elliptic divisibility sequences given by Shipsey and Swart \cite{SS}.
They used recurrence relations to prove Ward's results.
For genus $2$ curves,
Cantor proved that
$\seq{c}$ satisfies a bilinear recurrence relation of Somos 8 type \cite[p.143]{Cantor},
where a recurrence relation is said to be of Somos $k$ type
if it is of the form
\[
	c_n c_{n+k} = \sum_{i=1}^{\lfloor k/2 \rfloor}
	\alpha_i c_{n+i} c_{n+k-i}.
\]
However, the recurrence relation of Somos 8 type alone does not seem to imply
Theorem \ref{MainTheorem} and
Theorem \ref{MainTheorem2}.

In this paper, we shall first show that 
$\seq{c}$ satisfies the following recurrence relations for all integers $m$ and $n$
(see Theorem~\ref{thm:psi_Somos}):
	\begin{align*}
	\begin{split}
		c_4 c_{n+m} c_{n-m}
		&= c_{m+1} c_{m-1} c_{n+3} c_{n-3} \\
		& \quad + (c_4 c_m^2 - c_3^2 c_{m+1} c_{m-1})
		c_{n+2} c_{n-2} \\
		& \quad + (c_3^2 c_{m+2} c_{m-2} - c_{m+3} c_{m-3})
		c_{n+1} c_{n-1} \\
		& \quad - c_4 c_{m+2} c_{m-2} c_n^2,
	\end{split}\\
	\begin{split}
 		c_3 c_5 c_{n+m+1} c_{n-m}
		&= c_3 c_{m+2} c_{m-1} c_{n+4} c_{n-3} \\
		& \quad + (c_5 c_{m+1} c_m - c_3 c_4 c_{m+2} c_{m-1})
		c_{n+3} c_{n-2} \\
		& \quad + (c_3 c_4 c_{m+3} c_{m-2} - c_3 c_{m+4} c_{m-3})
		c_{n+2} c_{n-1} \\
		& \quad - c_5 c_{m+3} c_{m-2} c_{n+1} c_n.
	\end{split}
	\end{align*}
In fact, these recurrence relations are satisfied by Cantor's division polynomials $\{ \psi_n(X) \}_{n\in\mathbb{Z}}$, which may be of independent interest.
Specializing to $m=4$ and $5$, we obtain
bilinear recurrence relations of Somos $8$, $9$, $10$ and $11$ type
satisfied by $\seq{c}$ (see Corollary~\ref{cor:Somos8-11}),
which includes Cantor's recurrence relation mentioned above.
Using these as key ingredients, we prove Theorem \ref{MainTheorem} and Theorem \ref{MainTheorem2} by inductive arguments.

Note that some other sequences satisfying relations of
Somos type have appeared in the literature.
As examples of recent results, Hone~\cite{Hone} proved that
certain Hankel determinants corresponding to a genus $2$ curve
satisfy a relation of Somos $8$ type.
Doliwa~\cite{Doliwa} proved some bilinear relations
for multipole orthogonal polynomials via their determinantal expressions.

Independently of our work,
Ustinov~\cite[Theorem~1]{Ustinov} recently proved that
the reduction modulo an arbitrary integer of a sequence
satisfying a relation of Somos type are eventually periodic
if the sequence has finite rank.
Here a sequence $\{ s_n \}_{n\in\mathbb{Z}}$ has \emph{finite rank}
if the matrices
\[
	M_s^{(0)} = (s_{m+n} s_{m-n})_{m,n\in\mathbb{Z}}, \quad
	M_s^{(1)} = (s_{m+n+1} s_{m-n})_{m,n\in\mathbb{Z}}
\]
have finite rank.
This result is proved by several recurrence relations
of Somos type and the pigeonhole principle
similarly to Proposition~\ref{prop:pigeonhole}.
Ustinov's theorem can be applied to the case a modulus is not prime.
On the other hand, the upper bound of the period,
although it is not given explicitly in \cite{Ustinov},
is larger than our bound as discussed in Remark~\ref{rem:piegonhole}.

The outline of this paper is as follows.
In Section 2, we recall Cantor's division polynomials
of a genus $2$ curve and their basic properties.
Cantor's division polynomials are described
by the hyperelliptic sigma function.
A classical formula of theta functions proved by Caspary and
Frobenius shows that the sequence $\seq{c}$ satisfies some recurrence relations.
In Section 3, using the recurrence relation obtained in Section 2,
we prove the periodicity of the reduction modulo $p$ 
of the sequence $\seq{c}$.
In Section 4, we prove Theorem \ref{MainTheorem}
and Theorem \ref{MainTheorem2}.
In Appendix A, we prove a formula relating Cantor's division polynomials
and hyperelliptic sigma functions.
In Appendix B, we give a numerical example.
For the integer sequence introduced by Cantor (OEIS A058231),
we give numerical results on
the period of the reduction modulo $p$ of the sequence $\seq{c}$
and the order of a point on the reduction modulo $p$ of the Jacobian variety.

\section{Cantor's division polynomials}
In this section, we prove some properties of Cantor's division polynomials
used in the proof of Theorem~\ref{MainTheorem}.

Let $K$ be a field of characteristic different from $2$.
Let $C$ be a hyperelliptic curve of genus $2$ defined by
\[
	Y^2 = F(X) \coloneqq X^5 + a_4 X^4 + a_3 X^3 + a_2 X^2 + a_1 X + a_0,
\]
where $a_0,a_1,a_2,a_3,a_4\in K$.
Let $\Jac(C)$ be the Jacobian variety of $C$.
Let $\infty \in C$ be the point at infinity of $C$.
We embed $C$ into $\Jac(C)$ by $P \mapsto D_P \coloneqq [P] - [\infty]$.
The image of $C$ is written as $\Theta$, which is called the \textit{theta divisor} on $\Jac(C)$.

For an integer $n \geq 0$,
let $\psi_n(X) \in K[X]$ be the division polynomials of $C$ defined by Cantor; see \cite{Cantor} for details.
We extend the division polynomials for $n < 0$ by
$\psi_n(X) \coloneqq -\psi_{-n}(X)$.
For $-1\leq n\leq 3$, they are given by
\[
    \psi_{-1}(X) = \psi_0(X) = \psi_1(X) = 0, \quad
    \psi_2(X) = 1, \quad
    \psi_3(X) = 4 F(X).
\]

\begin{thm}
\label{thm:Theta}
Let $P = (x_P, y_P) \in C(K)$ be a $K$-rational point with $y_P \neq 0$, and $n\ge 3$.
The following assertions hold.
\begin{enumerate}
    \item $n D_P \in \Theta$ if and only if $\psi_n(x_P) = 0$.
    \item $n D_P = 0$ if and only if $\psi_{n-1}(x_P) = \psi_n(x_P) = \psi_{n+1}(x_P) = 0$.
\end{enumerate}
\end{thm}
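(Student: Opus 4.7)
The plan is to prove (1) via the sigma-function representation of Cantor's division polynomials to be established in Appendix~A, and then to derive (2) from (1) by a short geometric argument on $\Jac(C)$. The technical heart of the argument is the sigma-function identity itself; once in hand, both assertions reduce to familiar facts about the theta divisor and the hyperelliptic $g^1_2$.

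For (1), we work first over $\C$. Since the statement is an identity of universal polynomials in the coefficients $a_0,\dots,a_4$, it extends to fields of arbitrary characteristic $\neq 2$ by specialization. Over $\C$, let $\sigma$ be the hyperelliptic sigma function attached to $C$, which (up to translation) vanishes precisely on the preimage of $\Theta \subset \Jac(C)$ under the Abel--Jacobi map, and let $u_P \in \C^2$ be a lift of $D_P$. The formula to be proved in Appendix~A expresses $\psi_n(x_P)$ as the product of a nonvanishing factor (depending only on $P$, using the assumption $y_P \neq 0$) with a value of the sigma function at $n u_P$. Thus $\psi_n(x_P) = 0$ if and only if $n u_P$ projects into $\Theta$, if and only if $n D_P \in \Theta$.

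For (2), the forward direction is immediate from (1): if $n D_P = 0$ then $(n-1) D_P = -D_P$, $n D_P = 0$, and $(n+1) D_P = D_P$ all lie in $\Theta$ (using $-\Theta = \Theta$ on a hyperelliptic Jacobian), so (1) gives the three vanishings. Conversely, suppose $\psi_{n-1}(x_P) = \psi_n(x_P) = \psi_{n+1}(x_P) = 0$. By (1), write $(n + i) D_P = [Q_i] - [\infty]$ for $i = -1, 0, 1$ with $Q_i \in C$; then $n D_P - (n-1) D_P = D_P$ yields the linear equivalence $[Q_{-1}] + [P] \sim [Q_0] + [\infty]$. On a genus $2$ curve, two linearly equivalent effective divisors of degree $2$ either coincide or both lie in the unique $g^1_2 = |2[\infty]|$. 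Equality of divisors, combined with $P \neq \infty$, would force $Q_{-1} = \infty$ and $Q_0 = P$, hence $(n+1) D_P = 2 D_P$; then $\psi_{n+1}(x_P) = 0$ would give $2 D_P \in \Theta$, and a Riemann--Roch argument ($h^0(2[P]) = 1$ for $P$ not a Weierstrass point) forces $2 D_P = 0$, contradicting the hypothesis $y_P \neq 0$, which excludes $D_P$ from the $2$-torsion of $\Jac(C)$. Hence both divisors lie in the hyperelliptic pencil; since the fiber of $C \to \mathbb{P}^1$ over the point at infinity is $2[\infty]$, we conclude $Q_0 = \infty$, i.e., $n D_P = 0$.
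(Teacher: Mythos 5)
Your derivation of (2) from (1) is correct and is genuinely different from the paper, which simply cites Cantor's algebraic treatment for both parts: the Riemann--Roch dichotomy for degree-$2$ effective divisors (rigid versus the hyperelliptic pencil $|2[\infty]|$), the use of the vanishing of $\psi_{n+1}$ to kill the ``rigid'' case via $2D_P\in\Theta$, and the observation that the only fiber of the $g^1_2$ containing $\infty$ is $2[\infty]$ are all sound. Your proof of (1) over $\C$ is also essentially right, granting Theorem~\ref{thm:analytic_psi}; the one point you leave implicit is that the ``nonvanishing factor'' includes $\sigma_2(u)^{n^2}$, so you need $\sigma_2(u)\neq 0$ for $P\neq\infty$, which is true but deserves a sentence.

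The genuine gap is the one-sentence reduction of (1) from $\C$ to arbitrary $K$ of characteristic $\neq 2$. The assertion is not ``an identity of universal polynomials'': it is a biconditional between a geometric incidence condition ($nD_P\in\Theta$) and the vanishing of $\psi_n(x_P)$, i.e.\ an equality of vanishing loci. Equality of vanishing loci over $\C$ does \emph{not} specialize to positive characteristic: two integral polynomials can cut out the same locus over $\C$ while differing by a factor that becomes zero or non-unit modulo $p$ (already $x$ versus $2x$ modulo $2$ illustrates the failure mode). To make your argument work you would have to exhibit a universal polynomial over $\Z[a_0,\dots,a_4][1/(2\operatorname{disc}F)]$ cutting out the scheme-theoretic locus $\{P: nD_P\in\Theta\}$, prove that $\psi_n$ agrees with it up to a unit of that ring, and check the unit stays a unit after reduction --- which is in substance the algebraic argument Cantor gives and the paper invokes by reference. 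This matters here because the theorem is applied in Section~3 precisely with $K=\F_p$ (Claims~\ref{claim:rgeq9} and~\ref{claim:r+2,r+3}), so the characteristic-$p$ case is not a cosmetic generality but the whole point; as written, your proof establishes the theorem only for subfields of $\C$.
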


\begin{proof}
See \cite[pp.~140--141]{Cantor}.
\end{proof}

\begin{lem}
\label{lem:consecutive4values}
Let $P = (x_P, y_P) \in C(K)$ be a point with $y_P \neq 0$.
For every integer $n \in \Z$, at least one of
\[
\psi_{n}(x_P),\ \psi_{n+1}(x_P),\ \psi_{n+2}(x_P),\ \psi_{n+3}(x_P)
\]
is not zero.
\end{lem}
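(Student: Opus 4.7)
The plan is to argue by contradiction: suppose that $\psi_n(x_P) = \psi_{n+1}(x_P) = \psi_{n+2}(x_P) = \psi_{n+3}(x_P) = 0$ for some integer $n$. I would split the argument according to the size of $n$, using three ranges that exhaust $\Z$ and are handled by three different tools.

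For $-5 \le n \le 2$, the window $\{n, n+1, n+2, n+3\}$ contains either $2$ or $-2$. Since $\psi_2(X) = 1$ identically and $\psi_{-2}(X) = -\psi_2(X) = -1$ by the sign convention, at least one of the four values equals $\pm 1$, immediately contradicting the assumption.

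For $n \ge 3$, all four indices are at least $3$, so Theorem~\ref{thm:Theta}\,(2) is applicable at the two middle indices $n+1$ and $n+2$. The vanishing of $\psi_n(x_P), \psi_{n+1}(x_P), \psi_{n+2}(x_P)$ gives $(n+1) D_P = 0$ in $\Jac(C)$, while the vanishing of $\psi_{n+1}(x_P), \psi_{n+2}(x_P), \psi_{n+3}(x_P)$ gives $(n+2) D_P = 0$. Subtracting yields $D_P = 0$, i.e., $[P] \sim [\infty]$ as degree-one divisors on $C$. Since the Abel--Jacobi embedding $P \mapsto D_P$ is injective on a curve of genus $\ge 1$, this forces $P = \infty$, contradicting the hypothesis that $P$ is given in affine coordinates with $y_P \ne 0$.

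For $n \le -6$, I would reduce to the previous case by invoking $\psi_{-k}(X) = -\psi_k(X)$: the four vanishings at indices $n, n+1, n+2, n+3$ are equivalent to four vanishings at indices $m, m+1, m+2, m+3$ with $m \coloneqq -(n+3) \ge 3$, which has just been ruled out. Together these three ranges cover all $n \in \Z$. The argument is essentially routine once Theorem~\ref{thm:Theta}\,(2) is in hand; the only point of care is the case split needed to ensure the theorem's hypothesis $n \ge 3$ is met, and this is absorbed by the boundary identities $\psi_{\pm 2}(X) = \pm 1$ and the symmetry $\psi_{-k} = -\psi_k$. I do not foresee any substantial obstacle.
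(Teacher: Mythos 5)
Your proof is correct, but it takes a genuinely different route from the paper's. The paper does not argue via Theorem~\ref{thm:Theta} at all: after the same reduction to $n \ge 3$ using $\psi_{-n} = -\psi_n$ and $\psi_{\pm 2} = \pm 1$, it quotes Cantor's Lemma~3.29, which says that among any four consecutive values of the auxiliary rational functions $f_r$ on $C$ at least one is nonzero, and then transfers this to the $\psi_r$ via the identity $\psi_r(X) = (2Y)^{(r^2-r-2)/2} f_r$ together with $y_P \neq 0$. Your argument instead applies Theorem~\ref{thm:Theta}\,(2) at the two middle indices to get $(n+1)D_P = (n+2)D_P = 0$, hence $D_P = 0$, contradicting the injectivity of the Abel--Jacobi embedding; the case split and the boundary check that the window $\{n,\dots,n+3\}$ meets $\{-2,2\}$ for $-5 \le n \le 2$ are all handled correctly, and the hypothesis $n \ge 3$ of Theorem~\ref{thm:Theta} is satisfied where you invoke it. Both arguments ultimately rest on results imported from Cantor's paper, but they rest on different ones: yours stays entirely within the statements already displayed in this paper (Theorem~\ref{thm:Theta} plus the standard fact that a point of a positive-genus curve is not linearly equivalent to $\infty$ unless it equals $\infty$), which makes it more self-contained and more geometric, whereas the paper's proof is shorter but requires the reader to look up the definition of the functions $f_r$ and the unnormalized form of the division polynomials in Cantor's article.
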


\begin{proof}
Since $\psi_{-n}(X) = -\psi_{n}(X)$, $\psi_2(X) = 1 \neq 0$, and
$\psi_{-2}(X) = -1 \neq 0$,
we may assume $n \geq 3$.
By \cite[Lemma 3.29]{Cantor}, at least one of
$f_n$, $f_{n+1}$, $f_{n+2}$, $f_{n+3}$ is not zero,
where $f_r$ is a rational function on $C$ defined in \cite[Section 3, Section 8]{Cantor}.
We have $\psi_r(X) = (2 Y)^{(r^2 - r - 2)/2} f_r$; see \cite[p.133, (8.7)]{Cantor}.
Since $y_P \neq 0$, at least one of
$\psi_n(x_P)$, $\psi_{n+1}(x_P)$, $\psi_{n+2}(x_P)$, $\psi_{n+3}(x_P)$
is not zero.
\end{proof}

In the rest of this section, let $K$ be a subfield of $\C$.
Cantor's division polynomials $\psi_n(X)$ can
be expressed by using the hyperelliptic sigma function.
Let $\sigma\colon\C^2\to\C$ be the hyperelliptic sigma function associated with $C$.
(For recent developments on the theory of sigma functions, see \cite{BEL} and references therein.
We adopt the notation used in \cite{Onishi2002,Onishi}.)
We define
\[ \sigma_2(u) \coloneqq \frac{\partial\sigma(u)}{\partial u_2}, \]
where $u=(u_1,u_2)\in\C^2$.

The following theorem essentially follows from the description of Cantor's division polynomials in \cite[Appendix~A]{Onishi}
(see also \cite[p.~518]{Matsutani}),
but there are sign errors in the literature.
For the convenience of the readers,
we correct a proof in Appendix \ref{sec:analytic_psi}.

\begin{thm}
\label{thm:analytic_psi}
Let $P = (x_P, y_P)\in C(\C)$ be a point and
let $u\in\C^2$ be the point corresponding to $P$ (for the definition of $u$, see Lemma \ref{lem:expansion2}).
Then we have
\[
	2 y_P \psi_n(x_P) = (-1)^n \frac{\sigma(n 
	u)}{\sigma_2(u)^{n^2}}.
\]
\end{thm}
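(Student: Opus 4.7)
The plan is to identify both sides by comparing them as meromorphic functions of $P \in C(\C) \setminus \{\infty\}$ via the Abel--Jacobi parametrisation $P \mapsto u$, exploiting that $\sigma$ vanishes precisely on the theta divisor. The key input is Theorem~\ref{thm:Theta}(1), which already says that $\psi_n(x_P) = 0$ iff $n D_P \in \Theta$, i.e.\ iff $\sigma(nu) = 0$; this strongly suggests that the two sides agree up to a nonvanishing factor which is the relevant power of the ``transverse derivative'' $\sigma_2(u)$.

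First I would set $f_n(P) \coloneqq 2 y_P \psi_n(x_P) \cdot \sigma_2(u)^{n^2} / \sigma(nu)$ and check that $f_n$ is well defined and nonvanishing on $C(\C) \setminus \{\infty\}$. The numerator $2 y_P \psi_n(x_P)$ and denominator $\sigma(nu)$ have matching zero loci by Theorem~\ref{thm:Theta}(1), and $\sigma_2(u)$ is generically nonzero on $u(C)$ because it is the transversal derivative of $\sigma$ to $\Theta$ at smooth points of the theta divisor. The exponent $n^2$ on $\sigma_2(u)$ is forced by pole-cancellation as $P$ runs to any point where $\sigma(nu)$ has a multi-fold zero (via the Abel--Jacobi image of higher-multiplicity points on $\Theta$), which is exactly the setup of \cite{Onishi}.

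Next I would pin down $f_n$ by letting $P \to \infty$. Using a local parameter $t$ at $\infty$ (a Weierstrass point, so $t^2 = 1/x_P$ up to units), one expands both $u_1(t), u_2(t)$ (via the explicit integrals of the holomorphic differentials on $C$) and $\sigma$ at $u = 0$: the standard Taylor expansion of the genus-$2$ sigma function starts with $\sigma(u) = u_2 + O(\deg \geq 3)$, so $\sigma_2(0) = 1$, and one computes the leading $t$-order of $\sigma(nu)$ and of $2 y_P \psi_n(x_P)$ on the curve. Matching leading coefficients shows $f_n$ is a nonzero constant and determines that constant to be $(-1)^n$ (modulo the sign issue below). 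This is essentially the argument of Ōnishi, and the role of Appendix~A is to redo this comparison cleanly rather than to introduce a new idea.

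The main obstacle is tracking the sign $(-1)^n$, which is exactly where the sign errors in the literature originate. Candidate consistency checks I would use to ensure the sign is correct are: (i)~both sides are odd in $n$ (the left using $\psi_{-n} = -\psi_n$ together with $2 y_P$, the right using $\sigma(-nu) = -\sigma(nu)$), so the sign must be of the form $\varepsilon^n$ with $\varepsilon \in \{\pm 1\}$; (ii)~the base cases $n = 2, 3$, where $\psi_2 = 1$ and $\psi_3 = 4F(X) = 4y_P^2$ are explicit, fix $\varepsilon = -1$. Together with the divisor-comparison argument above, this yields the stated formula $2 y_P \psi_n(x_P) = (-1)^n \sigma(nu)/\sigma_2(u)^{n^2}$ for all $n$.
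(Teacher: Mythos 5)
Your overall plan---establish the identity up to a sign by a divisor comparison (in effect, by invoking \^{O}nishi's determinant expression), then fix the sign by expanding at $P=\infty$---is the same route the paper takes in Appendix~\ref{sec:analytic_psi}, which quotes \cite[Theorem~A~1]{Onishi} for the equality up to sign and then compares leading Laurent coefficients at $u_2=0$. But two of your steps are genuinely broken. The more serious one is your expansion of the sigma function: in the normalization used here (Lemma~\ref{lem:expansion1}) one has $\sigma(u)=u_1+\tfrac16 a_2u_1^3-\tfrac13u_2^3+(d^\circ\ge5)$, so $\sigma_2(0)=0$, not $1$, and along the curve $\sigma_2(u)=-u_2^2+(d^\circ(u_2)\ge3)$ by Lemma~\ref{lem:expansion2}. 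Your claim that $\sigma(u)=u_2+O(3)$ is incompatible with $\sigma$ vanishing identically on $\Theta=u(C)$ (where $u_2$ is the weight-one coordinate), and it destroys the bookkeeping you need: the factor $\sigma_2(u)^{n^2}=(-1)^{n^2}u_2^{2n^2}+\cdots$ is precisely what produces the pole of order $2n^2-3$ matching $2y_P\psi_n(x_P)=-2\binom{n+1}{3}u_2^{-(2n^2-3)}+\cdots$ (via Lemma~\ref{lem:Psi}), and, through $(-1)^{n^2}=(-1)^n$, it is also the source of the sign $(-1)^n$. With $\sigma_2(0)=1$ your right-hand side would tend to $0$ while your left-hand side blows up, so the leading-order matching you describe cannot be carried out.

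The second problem is the sign determination itself. Oddness of both sides in $n$ only shows that the unknown sign $\epsilon_n\in\{\pm1\}$ satisfies $\epsilon_{-n}=\epsilon_n$; it does not force $\epsilon_n=\varepsilon^n$. The sign errors this appendix corrects are exactly of the shape $(-1)^{r(r-1)/2}$ versus $(-1)^{(r-g)(r-g+1)/2}$, so a check at $n=2,3$ under an unjustified ansatz cannot settle the general case (and verifying the base case $n=3$ would already require the explicit triplication formula for $\sigma$, which is essentially as hard as the statement). The paper avoids both issues by computing the leading Laurent coefficient of each side for \emph{general} $n$, namely $-2\binom{n+1}{3}$ on the left and $2(-1)^{n+1}\binom{n+1}{3}$ on the right, and reading off $(-1)^n$ directly. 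Finally, note that your divisor argument only matches zero \emph{sets} via Theorem~\ref{thm:Theta}~(1); to conclude that $f_n$ is constant you would also need matching multiplicities and the cancellation of the quasi-periodicity factors of $\sigma$, which is why the paper simply cites \^{O}nishi's determinant expression for that step rather than re-deriving it.
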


The following argument is almost the same as that in \cite[Section~6]{Uchida}.
\begin{prop}\label{prop:CF}
Let $d \ge 6$ be an even integer and $u^{(1)},u^{(2)},\dotsc,u^{(d)}\in\C^2$.
Then we have
\begin{equation}\label{eq:CF}
	\operatorname{pf} \begin{pmatrix}
		\sigma(u^{(i)} + u^{(j)}) \sigma(u^{(i)} - u^{(j)})
	\end{pmatrix}_{1\le i,j\le d} = 0,
\end{equation}
where $\operatorname{pf} A$ is the Pfaffian of $A$.
\end{prop}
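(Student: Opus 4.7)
The plan is to show that the $d \times d$ matrix
$M = (\sigma(u^{(i)}+u^{(j)})\,\sigma(u^{(i)}-u^{(j)}))_{1 \le i, j \le d}$
has rank at most $4 = 2^{g}$ (with $g = 2$), from which the vanishing of $\operatorname{pf}(M)$ for $d \ge 6$ follows immediately. First, because the hyperelliptic sigma function is odd, $\sigma(v-u) = -\sigma(u-v)$, so $M$ is antisymmetric and its Pfaffian is defined, with $\operatorname{pf}(M)^{2} = \det M$.

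The main structural input is a bilinear expansion of the form
\[
	\sigma(u+v)\,\sigma(u-v) = \sum_{k=1}^{4} f_{k}(u)\,g_{k}(v)
\]
for suitable entire functions $f_{k}, g_{k} \colon \C^{2} \to \C$. This is the genus-$2$ instance of the classical Riemann--Frobenius (``Caspary--Frobenius'') addition theorem. One route is to write the sigma function as an exponentially twisted Riemann theta function, $\sigma(u) = c\,e^{-\frac{1}{2}u^{T}\eta u}\,\vartheta[\delta](u)$, and then apply the standard addition formula that expresses $\vartheta(u+v)\,\vartheta(u-v)$ as a sum of $2^{g}$ products of second-order theta functions; the exponential prefactors on the two sides cancel because $(u+v)^{T}\eta(u+v) + (u-v)^{T}\eta(u-v) = 2u^{T}\eta u + 2v^{T}\eta v$ splits additively. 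An alternative, more intrinsic argument is that for each fixed $v$, the map $u \mapsto \sigma(u+v)\,\sigma(u-v)$ is a section of a symmetric line bundle of type $(2,2)$ on $\Jac(C)$, whose space of global sections has dimension $2^{g}=4$ by Riemann--Roch; thus this function lives, as $v$ varies, in a fixed $4$-dimensional space in $u$, which forces the bilinear expansion.

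Once the expansion is in hand, the matrix $M$ factors as a product of the $d \times 4$ matrix $A = (f_{k}(u^{(i)}))_{i,k}$ with the $4 \times d$ matrix $B = (g_{k}(u^{(j)}))_{k,j}$, so $\mathrm{rank}(M) \le 4 < d$. Therefore $\det M = 0$, and $\operatorname{pf}(M) = 0$ follows from the identity $\operatorname{pf}(M)^{2} = \det M$ for antisymmetric matrices.

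The main obstacle is making the bilinear decomposition completely precise for the sigma function rather than for a raw Riemann theta function: the exponential quadratic twist relating $\sigma$ and $\vartheta$ must be carried through the classical addition formula and shown to cancel on both sides, which is routine but notationally heavy. If writing a proof in full, I would lean on the Riemann--Roch route together with the quasi-periodicity of $\sigma(u+v)\sigma(u-v)$ in $u$, as this bypasses any explicit bookkeeping with theta characteristics.
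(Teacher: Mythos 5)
Your argument is correct and is essentially the same as the paper's: the paper simply cites Uchida (Corollary 6.2), Baker, and the Caspary--Frobenius theta identities, and your rank-at-most-$2^g=4$ argument via the bilinear (Riemann addition formula) decomposition of $\sigma(u+v)\sigma(u-v)$, combined with $\operatorname{pf}(M)^2=\det M$, is precisely the standard derivation underlying those references. The only point worth double-checking in a full write-up is the one you already flag, namely that the quadratic exponential factor relating $\sigma$ to $\vartheta$ splits multiplicatively into a function of $u$ times a function of $v$ (it does, since $\eta'\omega'^{-1}$ is symmetric), so the $2^g$-term expansion survives the passage from $\vartheta$ to $\sigma$.
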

\begin{proof}
See \cite[Corollary 6.2]{Uchida} or \cite[p.\ 473, Ex.\ v]{Baker}.
The proposition follows from similar formulas
for theta functions proved by
Caspary~\cite{Caspary} and Frobenius~\cite{Frobenius}.
\end{proof}

Let $P = (x_P, y_P)\in C(\C)$ be a point and
we put $c_n \coloneqq \psi_n(x_P)$.

\begin{thm}
\label{thm:psi_Somos}
For all integers $m$ and $n$, we have
	\begin{align}
	\label{eq:Somos_even}
	\begin{split}
		c_4 c_{n+m} c_{n-m}
		&= c_{m+1} c_{m-1} c_{n+3} c_{n-3} \\
		& \quad + (c_4 c_m^2 - c_3^2 c_{m+1} c_{m-1})
		c_{n+2} c_{n-2} \\
		& \quad + (c_3^2 c_{m+2} c_{m-2} - c_{m+3} c_{m-3})
		c_{n+1} c_{n-1} \\
		& \quad - c_4 c_{m+2} c_{m-2} c_n^2,
	\end{split}\\
	\label{eq:Somos_odd}
	\begin{split}
 		c_3 c_5 c_{n+m+1} c_{n-m}
		&= c_3 c_{m+2} c_{m-1} c_{n+4} c_{n-3} \\
		& \quad + (c_5 c_{m+1} c_m - c_3 c_4 c_{m+2} c_{m-1})
		c_{n+3} c_{n-2} \\
		& \quad + (c_3 c_4 c_{m+3} c_{m-2} - c_3 c_{m+4} c_{m-3})
		c_{n+2} c_{n-1} \\
		& \quad - c_5 c_{m+3} c_{m-2} c_{n+1} c_n.
	\end{split}
	\end{align}
\end{thm}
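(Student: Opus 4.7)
The plan is to derive both identities by applying the Caspary--Frobenius Pfaffian formula of Proposition~\ref{prop:CF} (with $d=6$) to two carefully chosen $6$-tuples of vectors in $\C^2$, and then translating the resulting $\sigma$-identity into a relation among the $c_n$'s via Theorem~\ref{thm:analytic_psi}. In both cases I would index the tuple so that the corresponding (half-)integers are $(k_1,\dots,k_6) = (n,m,3,2,1,0)$; the crucial point is that with this labelling the entries $a_{34}, a_{45}, a_{56}$ of the Pfaffian matrix will contain the factor $c_1 = \psi_1(x_P) = 0$, killing eight of the fifteen pairings in the expansion and leaving only seven signed terms to track.

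For \eqref{eq:Somos_even} I would take $u^{(i)} \coloneqq k_i u$. Theorem~\ref{thm:analytic_psi} then gives
\[
  \sigma((k_i+k_j)u)\,\sigma((k_i-k_j)u) \;=\; (2y_P)^2\, c_{k_i+k_j}c_{k_i-k_j}\, \sigma_2(u)^{2(k_i^2+k_j^2)},
\]
and the total $\sigma_2(u)$-exponent of a product of three entries coming from any pairing of $\{1,\dots,6\}$ is the pairing-independent sum $2\sum_{s=1}^{6} k_s^2$, so the common powers of $\sigma_2(u)$ and of $(2y_P)^2$ factor out. Substituting the simplified values $a_{35}=c_4$, $a_{36}=c_3^2$, $a_{46}=c_3$ (forced by $c_2=1$), isolating the unique surviving pairing $\{12,35,46\}$ (whose contribution is $-c_4\,c_{n+m}c_{n-m}$), and collecting the remaining six signed terms by $c_{n+j}c_{n-j}$ yields precisely \eqref{eq:Somos_even}.

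For \eqref{eq:Somos_odd} index parity prevents obtaining $c_{n+m+1}c_{n-m}$ from sums and differences of integer multiples of $u$. I would bypass this by setting $u^{(i)} \coloneqq (k_i+\tfrac12)u$ with the same $(k_1,\dots,k_6) = (n,m,3,2,1,0)$, so that $u^{(i)} \pm u^{(j)}$ become the integer multiples $(k_i+k_j+1)u$ and $(k_i-k_j)u$. Each entry now equals $-(2y_P)^2\,c_{k_i+k_j+1}c_{k_i-k_j}\,\sigma_2(u)^{2k_i^2+2k_j^2+2k_i+2k_j+1}$, and again the pairing-independence of both the total $\sigma_2(u)$-exponent and the overall sign $(-1)^3$ allows everything except the $c$-factors to cancel uniformly. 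The same three entries still vanish (they contain $c_1=0$), and the surviving seven terms (with $a_{35}=c_5$, $a_{36}=c_3 c_4$, $a_{46}=c_3$) combine, after isolating the $-c_3 c_5\,c_{n+m+1}c_{n-m}$ contribution from $\{12,35,46\}$, into \eqref{eq:Somos_odd}.

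Since Theorem~\ref{thm:analytic_psi} requires $y_P \neq 0$, both identities as proved hold only on a Zariski-open subset of $C(\C)$; but the two sides are polynomials in $x_P$ (with coefficients polynomial in the $a_i$'s), so the identities extend to all $P$ by polynomial continuation. The main obstacle is not conceptual but a matter of combinatorial bookkeeping: one must verify that after isolating the $a_{12}$ term, the six remaining signed Pfaffian terms reassemble into the precise right-hand sides of \eqref{eq:Somos_even} and \eqref{eq:Somos_odd} with the correct $c_3, c_4, c_5$-dependent coefficients. I would carry this out by listing the seven surviving pairings, determining each sign via the standard Pfaffian convention, substituting the simplified small-index entries, and collecting by the factor $c_{n+j}c_{n-j}$ (respectively $c_{n+j+1}c_{n-j}$) to match coefficients.
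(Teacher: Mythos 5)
Your proposal is correct and takes essentially the same route as the paper: the paper's proof consists precisely of the two specializations you chose, namely $u^{(i)}=k_i u$ with $(k_1,\dots,k_6)=(n,m,3,2,1,0)$ for \eqref{eq:Somos_even} and the half-integer shift $(k_i+\tfrac12)u$ for \eqref{eq:Somos_odd}, fed into Proposition~\ref{prop:CF} via Theorem~\ref{thm:analytic_psi}, with the Pfaffian bookkeeping (eight vanishing pairings, seven survivors) left implicit. One minor slip: in the even case $a_{46}=c_2\cdot c_2=1$ rather than $c_3$ (the value $c_3$ is correct only in the odd case), which is consistent with your stated contribution $-c_4\,c_{n+m}c_{n-m}$ for the pairing $\{12,35,46\}$ and does not affect the argument.
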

\begin{proof}
Setting $d=6$, $u^{(1)}=nu$, $u^{(2)}=mu$,
$u^{(3)}=3u$, $u^{(4)}=2u$, $u^{(5)}=u$ and $u^{(6)}=0$ in \eqref{eq:CF},
we obtain \eqref{eq:Somos_even} by Theorem~\ref{thm:analytic_psi}
and Proposition \ref{prop:CF}.
Similarly, setting $u^{(1)}=(n+1/2)u$, $u^{(2)}=(m+1/2)u$,
$u^{(3)}=7u/2$, $u^{(4)}=5u/2$, $u^{(5)}=3u/2$ and $u^{(6)}=u/2$ in \eqref{eq:CF},
we obtain \eqref{eq:Somos_odd} by Theorem~\ref{thm:analytic_psi} and Proposition \ref{prop:CF}.
Note that we used $c_0 = c_1 = 0$ and $c_2 = 1$.
\end{proof}

By letting $m=4$ and $5$ in each of the above,
we obtain bilinear recurrence relations of Somos $8$, $9$, $10$ and $11$ type
satisfied by $\seq{c}$.

\begin{cor}
\label{cor:Somos8-11}
\begin{gather}
\label{eq:psi_Somos8}
	\begin{split}
	c_4 c_{n+4} c_{n-4}
	&= c_3 c_5 c_{n+3} c_{n-3}
	+ (c_4^3 - c_3^3 c_5) c_{n+2} c_{n-2}\\
	&\quad+ c_3^2 c_6 c_{n+1} c_{n-1}
	- c_4 c_6 c_n^2,
	\end{split}\\
\label{eq:psi_Somos9}
	\begin{split}
			c_3 c_5 c_{n+5} c_{n-4}
			&= c_3^2 c_6 c_{n+4} c_{n-3}
			+ c_4 (c_5^2 - c_3^2 c_6)
			c_{n+3} c_{n-2} \\
			&\quad + c_3 c_4 c_7 c_{n+2} c_{n-1}
			- c_5 c_7 c_{n+1} c_n,
	\end{split}\\
\label{eq:psi_Somos10}
	\begin{split}
		c_4 c_{n+5} c_{n-5}
		&= c_4 c_6 c_{n+3} c_{n-3}
		+ c_4 (c_5^2 - c_3^2 c_6) c_{n+2} c_{n-2} \\
		&\quad + (c_3^3 c_7 - c_8) c_{n+1} c_{n-1}
		- c_3 c_4 c_7 c_n^2,
	\end{split}\\
\label{eq:psi_Somos11}
	\begin{split}
			c_3 c_5 c_{n+6} c_{n-5}
			&= c_3 c_4 c_7 c_{n+4} c_{n-3}
			+ (c_5^2 c_6 - c_3 c_4^2 c_7)
			c_{n+3} c_{n-2} \\
			&\quad + c_3 (c_3 c_4 c_8 - c_9) c_{n+2} c_{n-1}
			- c_3 c_5 c_8 c_{n+1} c_n.
	\end{split}
\end{gather}
\end{cor}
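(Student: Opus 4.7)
The plan is to derive all four identities as direct specializations of the master recurrences \eqref{eq:Somos_even} and \eqref{eq:Somos_odd} already established in Theorem~\ref{thm:psi_Somos}. Concretely, I would obtain the Somos~$8$ and Somos~$10$ relations by substituting $m=4$ and $m=5$ respectively into the even relation \eqref{eq:Somos_even}, and the Somos~$9$ and Somos~$11$ relations by substituting $m=4$ and $m=5$ respectively into the odd relation \eqref{eq:Somos_odd}.

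Before doing the substitutions, I would record the initial values that collapse the right-hand sides: namely $c_1 = \psi_1(x_P) = 0$ and $c_2 = \psi_2(x_P) = 1$, both of which are immediate from the definitions given at the start of Section~2. These are the only nontrivial simplifications needed, because for $m \in \{4,5\}$ the shifted indices $m \pm j$ that appear range over $\{1,2,3,\dots,9\}$, and only the entries at $1$ and $2$ require special treatment.

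Next I would plug in each chosen $m$, match the four coefficient slots of $c_{n+i} c_{n-j}$ against the claimed identity, and verify the coefficients agree. For example, taking $m=4$ in \eqref{eq:Somos_even} converts the $c_{n+1} c_{n-1}$ coefficient into $c_3^2 c_6 c_2 - c_7 c_1 = c_3^2 c_6$ and the $c_n^2$ coefficient into $c_4 c_6 c_2 = c_4 c_6$, so \eqref{eq:psi_Somos8} falls out immediately; the three remaining cases are identical in flavor. The only bookkeeping to take care of is that the somewhat unusual combinations appearing in the statement, such as $c_4^3 - c_3^3 c_5$ in \eqref{eq:psi_Somos8} and $c_5^2 c_6 - c_3 c_4^2 c_7$ in \eqref{eq:psi_Somos11}, arise correctly after the substitution $c_2 = 1$.

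There is no real obstacle: Theorem~\ref{thm:psi_Somos} has already done the analytic heavy lifting via the Caspary--Frobenius Pfaffian identity of Proposition~\ref{prop:CF}, so the corollary is a purely algebraic specialization. Since the master identities hold for all integers $m,n$, the four specialized recurrences hold for every $n \in \Z$, completing the proof.
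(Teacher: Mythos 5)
Your proposal is correct and matches the paper's own (essentially one-line) proof: the paper likewise obtains \eqref{eq:psi_Somos8}--\eqref{eq:psi_Somos11} by setting $m=4$ and $m=5$ in \eqref{eq:Somos_even} and \eqref{eq:Somos_odd} and simplifying with $c_1=0$, $c_2=1$. The coefficient checks you outline all go through as claimed.
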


Note that the Somos 8 type relation \eqref{eq:psi_Somos8} was proved by Cantor~\cite[p.~143]{Cantor}.

\section{Periodicity of the values of Cantor's division polynomials}
In this section,
we prove the periodicity of the reduction modulo $p$ of the values of Cantor's division polynomials.
As in Section 1, let
$C$ be a hyperelliptic curve of genus $2$ over $\Q$ defined by
\[
	Y^2 = F(X) \coloneqq X^5 + a_4 X^4 + a_3 X^3 + a_2 X^2 + a_1 X + a_0,
\]
where $a_0,a_1,a_2,a_3,a_4\in \Z$.
For an integer $n \geq 0$, let $\psi_n(X) \in \Z[X]$ be
the division polynomial of $C$ defined by Cantor.
Let $P = (x_P,y_P) \ (x_P, y_P \in \Z)$ be an integral point on $C \backslash \{ \infty \}$.
We put
\[
D_P \coloneqq [P] - [\infty] \in \Jac(C)(\Q)
\quad \text{and} \quad
c_n \coloneqq \psi_n(x_P) \in \Z.
\]

\begin{thm}
\label{thm:period_psi3}
        Let $p$ be an odd prime which is not a divisor of
        the discriminant of $F(X)$.
        We also assume that $p$ is not a divisor of
$
	   c_3 c_4 c_5 c_6 c_7
	   (c_4^3 - c_3^3 c_5)
$.
        Let $\overline{D_P} \in \Jac(C)(\F_p)$ be the reduction modulo $p$ of $D_P$,
        and $r\coloneqq \ord_p(D_P)$ be the order of $\overline{D_P}$.
        Then we have the following:
        \begin{enumerate}
            \item We have $c_{r+2}, c_{r+3} \not\equiv 0 \pmod{p}$.
            \item Let $\alpha_p, \beta_p \in \F_p$ be elements satisfying
            \[
                \alpha_p \equiv c_{r+3}/(c_3 c_{r+2}) \pmod{p}, \quad \beta_p \equiv (c_3^2 c_{r+2}^3)/c_{r+3}^2 \pmod{p}.
            \]
            Then, we have the following relations
            for all integers $n$ and $k$:
            \begin{equation}\label{eq:psi_modp_identiy}
            c_{kr+n} \equiv \alpha_p^{kn} \beta_p^{k^2} c_n \pmod{p}.
            \end{equation}
            \item We have $\alpha_p^r = \beta_p^2$ in $\F_p$.
        \end{enumerate}
\end{thm}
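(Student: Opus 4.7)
My plan is to handle (1), then (2) for $k=1$, then (3), and finally (2) for general $k$. The key tools are the zero-locus description in Theorem~\ref{thm:Theta}, the bilateral Somos-type identities \eqref{eq:Somos_even}--\eqref{eq:Somos_odd}, and their specializations \eqref{eq:psi_Somos8}--\eqref{eq:psi_Somos11}.

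For (1), I first check that Theorem~\ref{thm:Theta} is applicable after reduction: $p\nmid\mathrm{disc}(F)$ gives good reduction, and $p\nmid c_3=4F(x_P)$ ensures $\overline{y_P}\neq 0$, so Theorem~\ref{thm:Theta} (2) yields $c_{r-1}\equiv c_r\equiv c_{r+1}\equiv 0\pmod{p}$. The non-vanishing of $c_{r+2}$ then follows from Lemma~\ref{lem:consecutive4values} applied to the four consecutive indices $r-1,r,r+1,r+2$. For $c_{r+3}$, the key point is that $(r+3)\overline{D_P}=3\overline{D_P}$ in $\Jac(C)(\F_p)$; Theorem~\ref{thm:Theta} (1) then gives $c_{r+3}\equiv 0\iff c_3\equiv 0\pmod p$, which is excluded. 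The same argument shows that $c_{r+j}\equiv 0\iff c_j\equiv 0\pmod p$ for every integer $j$, a fact I will reuse.

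The main content is (2) for $k=1$, which I prove by induction on $n$. The crucial observation is that both the shifted sequence $(c_{r+n})_n$ and the candidate $t_n\coloneqq\alpha_p^n\beta_p c_n$ satisfy the same Somos~$8$ recurrence \eqref{eq:psi_Somos8} modulo $p$ with constant coefficients $c_3,c_4,c_5,c_6$: for $(c_{r+n})_n$ this is translation invariance, while for $(t_n)_n$ every bilinear monomial $t_at_b$ with $a+b=2n$ picks up the uniform factor $\alpha_p^{2n}\beta_p^2$, which cancels throughout. It therefore suffices to verify $c_{r+n}\equiv t_n\pmod p$ on a block of consecutive indices long enough to launch the recurrence. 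The cases $n\in\{-1,0,1\}$ are trivial, $n\in\{2,3\}$ hold by the definitions of $\alpha_p,\beta_p$, and the remaining base cases such as $n\in\{-3,-2,4,5,6,7\}$ drop out of specializations of \eqref{eq:Somos_even}--\eqref{eq:Somos_odd} at $m=r+j$ for small $j$: the vanishing of $c_{r-1},c_r,c_{r+1}$ collapses most of the right-hand side and the non-vanishing of $c_j$ for $2\le|j|\le 7$ lets one solve algebraically. For instance, specializing \eqref{eq:Somos_even} at $m=r+3$, $n=4$ yields $c_3^2c_{r+2}c_{r+4}\equiv c_4c_{r+3}^2\pmod p$, which rearranges precisely to $c_{r+4}\equiv\alpha_p^4\beta_p c_4$. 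For the inductive step, \eqref{eq:psi_Somos8} solves for the next term whenever the pivot $c_4c_{n-4}$ is invertible modulo $p$; when $c_{n-4}$ vanishes modulo $p$, I switch to one of \eqref{eq:psi_Somos9}--\eqref{eq:psi_Somos11}, whose coefficients (including $c_4^3-c_3^3c_5$) are kept invertible by the standing hypothesis.

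Part (3) then follows as a consistency check: specializing \eqref{eq:Somos_even} at $m=r+2$, $n=r$ and using the vanishings $c_{r-1}\equiv c_r\equiv c_{r+1}\equiv 0$ to collapse the right-hand side gives $c_{2r+2}\equiv -c_{r+2}^3 c_{r-2}\pmod p$, which combined with the $k=1$ identity at $n=\pm 2$ yields $c_{2r+2}\equiv\alpha_p^4\beta_p^4\pmod p$; on the other hand, iterating the $k=1$ identity directly gives $c_{2r+2}\equiv\alpha_p^{r+4}\beta_p^2\pmod p$, and equating the two produces $\alpha_p^r\equiv\beta_p^2\pmod p$. Finally, (2) for general $k$ is proved by induction on $|k|$: writing $c_{(k+1)r+n}=c_{r+(kr+n)}$, applying $k=1$ followed by the inductive hypothesis yields $\alpha_p^{kr+(k+1)n}\beta_p^{k^2+1}c_n$, and using (3) to replace $\alpha_p^{kr}$ by $\beta_p^{2k}$ converts the exponent of $\beta_p$ into $(k+1)^2$, as required. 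The step I expect to be most delicate is the inductive case of (2) for $k=1$ when $c_{n-4}\equiv 0\pmod p$: a careful case analysis (controlled by which residues $n\bmod r$ satisfy $n\overline{D_P}\in\Theta$) is needed to show that at each such bad $n$ at least one of the longer relations \eqref{eq:psi_Somos9}--\eqref{eq:psi_Somos11} has an invertible pivot, and the non-vanishing hypothesis $c_3c_4c_5c_6c_7(c_4^3-c_3^3c_5)\not\equiv 0\pmod p$ is designed precisely to make this case analysis close up.
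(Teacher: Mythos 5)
Your proposal is correct and follows essentially the same route as the paper: the same theta-divisor argument for (1), the same base block $-3\le n\le 7$ followed by induction via the four Somos relations (with Lemma~\ref{lem:consecutive4values} supplying at least one invertible pivot among four consecutive terms) for the $k=1$ identity, and the same bootstrap to general $k$ using $\alpha_p^r=\beta_p^2$. The only cosmetic difference is your derivation of (3): the paper simply evaluates the $k=1$ identity at $n=2$ and $n=-r-2$ and uses the oddness $c_{-n}=-c_n$, rather than computing $c_{2r+2}$ in two ways.
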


Note that the conditions in Theorem~\ref{thm:period_psi3} are satisfied for all but finitely many $p$.

The proof of Theorem \ref{thm:period_psi3} is divided into several steps.
In principle, the strategy of our proof is similar to the proof for elliptic divisibility sequences by Shipsey and Swart~\cite[Theorem~2]{SS}.
However, our proof is more involved than theirs.
We need to analyze the reduction modulo $p$ of the sequence using recurrence relations of Somos $8$, $9$, $10$, $11$ type together.

In order to simplify the notation,
we omit ``$(\text{mod} \ p)$'' in the rest of this section.
All the congruences are taken modulo $p$.

\begin{claim}
$y_P \not\equiv 0$.
\end{claim}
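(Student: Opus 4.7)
The plan is to exploit the explicit formula $\psi_3(X) = 4F(X)$ recalled earlier in the paper, together with the fact that $P$ lies on $C$. Substituting $X = x_P$ gives
\[
    c_3 = \psi_3(x_P) = 4 F(x_P) = 4 y_P^2,
\]
where the last equality uses $y_P^2 = F(x_P)$.

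Now one invokes the standing hypothesis on $p$: by assumption $p$ does not divide $c_3 c_4 c_5 c_6 c_7 (c_4^3 - c_3^3 c_5)$, so in particular $p \nmid c_3$. Since $p$ is also odd, $p \nmid 4$. From $c_3 = 4 y_P^2$ we then conclude $p \nmid y_P^2$, hence $y_P \not\equiv 0 \pmod{p}$.

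There is no real obstacle here; the claim is a one-line consequence of the initial value $\psi_3 = 4F$ of Cantor's division polynomials combined with the hypotheses on $p$. The role of the claim in the section is merely to license the use, in what follows, of results such as Theorem~\ref{thm:Theta} and Lemma~\ref{lem:consecutive4values}, which require $y_P \neq 0$ (or rather its mod $p$ analogue).
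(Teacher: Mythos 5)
Your proof is correct and essentially identical to the paper's: both use $c_3 = \psi_3(x_P) = 4F(x_P)$, the hypothesis $p \nmid c_3$ (with $p$ odd), and the curve equation $y_P^2 = F(x_P)$ to conclude $y_P \not\equiv 0 \pmod p$. The only cosmetic difference is that you substitute $F(x_P) = y_P^2$ at the outset, whereas the paper first deduces $F(x_P) \not\equiv 0$ and then invokes the curve equation.
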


\begin{proof}
Since $c_3 = \psi_3(x_P) = 4 F(x_P)$ and $c_3 \not\equiv 0$,
we have $F(x_P) \not\equiv 0$.
This implies $y_P \not\equiv 0$.
\end{proof}

\begin{claim}
\label{claim:rgeq9}
The order $r = \ord_p(D_P)$ satisfies $r \ge 9$.
\end{claim}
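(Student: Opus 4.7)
The plan is to argue by contradiction, assuming $r \le 8$ and deriving a contradiction in each of the cases $r \in \{1, 2, \dotsc, 8\}$. The main tool is Theorem~\ref{thm:Theta}~(2), which converts the condition $n D_P = 0$ (for $n \ge 3$) into the vanishing of three consecutive values $c_{n-1}, c_n, c_{n+1}$ modulo $p$. The two small cases $r = 1, 2$ fall outside the range of Theorem~\ref{thm:Theta}~(2) and must be treated separately using geometric properties of $\Jac(C)$.

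First I would rule out $r = 1$: since the reduction of $C$ modulo $p$ is smooth (as $p \nmid \mathrm{disc}(F)$) and $P = (x_P, y_P)$ has integer coordinates, the reduction $\overline{P}$ is an affine point, distinct from $\overline{\infty}$, so $\overline{D_P} = [\overline{P}] - [\overline{\infty}] \neq 0$ in $\Jac(C)(\F_p)$.

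Next I would rule out $r = 2$. The condition $2 \overline{D_P} = 0$ is equivalent to $\overline{D_P} = -\overline{D_P}$; using the hyperelliptic involution $\iota\colon (x,y) \mapsto (x,-y)$ and the standard relation $[P] + [\iota P] \sim 2 [\infty]$, this forces $\overline{P} = \overline{\iota P}$, i.e.\ $\overline{y_P} \equiv 0 \pmod{p}$. But the previous claim (or directly, $c_3 = \psi_3(x_P) = 4 F(x_P) = 4 y_P^2$, together with $c_3 \not\equiv 0 \pmod{p}$) yields $\overline{y_P} \not\equiv 0 \pmod{p}$, a contradiction.

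Finally, for $r \in \{3, 4, 5, 6, 7, 8\}$, Theorem~\ref{thm:Theta}~(2) gives $c_{r-1} \equiv c_r \equiv c_{r+1} \equiv 0 \pmod{p}$. In each of these six cases, at least one of the indices $r-1, r, r+1$ lies in $\{2, 3, 4, 5, 6, 7\}$: for $r = 3$ one encounters $c_2 = \psi_2(x_P) = 1$, and for $r \in \{4, 5, 6, 7, 8\}$ at least one of $c_{r-1}, c_r, c_{r+1}$ belongs to $\{c_3, c_4, c_5, c_6, c_7\}$, all of which are nonzero modulo $p$ by the hypothesis $p \nmid c_3 c_4 c_5 c_6 c_7$. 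This contradicts the required vanishing, and completes the case analysis. No step appears to present a genuine obstacle; the argument is essentially a finite check assembled from Theorem~\ref{thm:Theta}~(2), the hypothesis list, and one classical observation about $2$-torsion on a hyperelliptic Jacobian.
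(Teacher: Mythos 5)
Your proof is correct and follows essentially the same route as the paper: rule out $r=1$ via $\overline{D_P}\neq 0$, rule out $r=2$ via $y_P\not\equiv 0 \pmod p$ (the paper states this more tersely as ``since $y_P\not\equiv 0$, we have $r\geq 3$''), and then apply Theorem~\ref{thm:Theta}~(2) to force $c_{r-1}\equiv c_r\equiv c_{r+1}\equiv 0 \pmod p$, which contradicts $c_2=1$ and the hypothesis $p\nmid c_3c_4c_5c_6c_7$ for each $r\in\{3,\dotsc,8\}$. Your write-up is simply a more explicit version of the paper's argument.
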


\begin{proof}
Note that $\overline{D_P} \neq 0 \in \Jac(C)(\F_p)$
since $x_P, y_P \in \Z$.
Since $y_P \not\equiv 0$, we have $r \geq 3$.
By Theorem~\ref{thm:Theta} (2) with $n=r$,
we have $c_{r-1} \equiv c_{r} \equiv c_{r+1} \equiv 0$. 
Since $c_3 c_4 c_5 c_6 c_7 \not\equiv 0$
by our assumption,
we have $r \geq 9$.
\end{proof}

\begin{claim}
\label{claim:r+2,r+3}
$c_{r+2}, c_{r+3} \not\equiv 0$. \end{claim}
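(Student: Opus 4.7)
The plan is to read off both nonvanishings directly from Theorem~\ref{thm:Theta}, applied to the reduction $\overline{P}\in C(\F_p)$ on the reduced curve $C_{\F_p}$. Since $p \nmid \mathrm{disc}(F)$, the reduction $C_{\F_p}$ is a smooth genus~$2$ hyperelliptic curve over $\F_p$, and since $\overline{y_P}\not\equiv 0\pmod{p}$ was just established, both parts of Theorem~\ref{thm:Theta} apply to $\overline{P}$ and to $\overline{D_P}\in\Jac(C_{\F_p})$.

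For $c_{r+2}\not\equiv 0 \pmod{p}$, I would use Theorem~\ref{thm:Theta}~(2) with $n=r+1$. Because $r\ge 9$, we have $(r+1)\overline{D_P}=\overline{D_P}\ne 0$, so the theorem forces not all of $c_r$, $c_{r+1}$, $c_{r+2}$ to vanish modulo $p$. Since $c_r\equiv c_{r+1}\equiv 0 \pmod{p}$, the only possible nonvanishing member is $c_{r+2}$.

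For $c_{r+3}\not\equiv 0 \pmod{p}$, I would instead use Theorem~\ref{thm:Theta}~(1). Since $r+3\ge 12\ge 3$, it gives $c_{r+3}\equiv 0 \pmod{p}$ if and only if $(r+3)\overline{D_P}\in\overline{\Theta}$. But $r\overline{D_P}=0$ forces $(r+3)\overline{D_P}=3\overline{D_P}$, so applying Theorem~\ref{thm:Theta}~(1) a second time with $n=3$ gives $3\overline{D_P}\in\overline{\Theta}$ if and only if $c_3\equiv 0 \pmod{p}$. The hypothesis $c_3\not\equiv 0 \pmod{p}$ then yields $c_{r+3}\not\equiv 0 \pmod{p}$.

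There is no real obstacle here: both halves of the claim reduce, via good reduction of $C$ and via $\overline{y_P}\not\equiv 0$, to facts already in hand ($r\ge 9$, $c_3\not\equiv 0$, and $c_r\equiv c_{r+1}\equiv 0$ modulo $p$). It is worth noting that the Somos-type recurrences of Corollary~\ref{cor:Somos8-11} play no role at this step; they will become essential only in the subsequent inductive derivation of the congruence~\eqref{eq:psi_modp_identiy}.
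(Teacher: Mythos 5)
Your proof is correct. The second half (the nonvanishing of $c_{r+3}$) is exactly the paper's argument: transport $3\overline{D_P}\notin\Theta$ to $(r+3)\overline{D_P}\notin\Theta$ using $r\overline{D_P}=0$, then apply Theorem~\ref{thm:Theta}~(1) at $n=r+3$. The first half takes a genuinely different (though equally short) route: the paper deduces $c_{r+2}\not\equiv 0$ from Lemma~\ref{lem:consecutive4values}, i.e.\ from the purely sequence-level fact that four consecutive values cannot all vanish, given that $c_{r-1}\equiv c_r\equiv c_{r+1}\equiv 0$; you instead apply Theorem~\ref{thm:Theta}~(2) at $n=r+1$ and use $(r+1)\overline{D_P}=\overline{D_P}\neq 0$. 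Both are valid, and the hypotheses you need ($r+1\ge 3$, $\overline{y_P}\neq 0$, $\overline{D_P}\neq 0$) are all in place at this point. The trade-off is minor: your version leans once more on the group law in $\Jac(C)(\F_p)$ and makes the two halves of the claim stylistically parallel (both via Theorem~\ref{thm:Theta}), whereas the paper's version gets $c_{r+2}\not\equiv 0$ from a fact about the sequence alone; since Lemma~\ref{lem:consecutive4values} is indispensable later in the inductive step (Claim~\ref{claim:period_psi3:2-1}) anyway, neither choice saves any machinery overall. Your closing remark that the Somos-type recurrences are not needed here is accurate.
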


\begin{proof}
Since $c_{r-1} \equiv c_{r} \equiv c_{r+1} \equiv 0$, by Lemma \ref{lem:consecutive4values}, we have $c_{r+2} \not\equiv 0$. By our assumption, $c_3 \not \equiv 0$. By Theorem~\ref{thm:Theta} (1) with $n = 3$, we have $3 \overline{D_P} \notin \Theta$.
Since $r \overline{D_P} = 0$, we have $(r+3)\overline{D_P} \notin \Theta$.
Therefore, again by Theorem~\ref{thm:Theta} (1) with $n=r+3$, we have $c_{r+3} \not \equiv 0$.
\end{proof}

This finishes the proof of the first assertion,
and it allows us to define $\alpha_p, \beta_p \in\F_p^\times$ as above.
We continue the proof of Theorem \ref{thm:period_psi3}.
As the base case of the induction,
we first prove \eqref{eq:psi_modp_identiy}
for $k=1$ and $-3\le n\le 7$:

\begin{claim}
\label{claim:period_psi3:2-1-base}
For integers $n$ satisfying $-3\le n\le 7$, we have
\begin{equation}
\label{eq:period_psi4}
	c_{r+n} \equiv \alpha_p^n \beta_p c_n.
\end{equation}
\end{claim}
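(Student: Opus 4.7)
The plan is to verify \eqref{eq:period_psi4} for each of the eleven integers $n \in \{-3, -2, \dotsc, 7\}$ in turn, starting from trivial cases and propagating outward via the Somos-type recurrences of Corollary~\ref{cor:Somos8-11}. The cases $n \in \{-1, 0, 1\}$ are immediate: the left-hand side vanishes by the congruences $c_{r-1} \equiv c_r \equiv c_{r+1} \equiv 0 \pmod{p}$ established in the proof of Claim~\ref{claim:rgeq9}, while the right-hand side vanishes because $c_{-1} = c_0 = c_1 = 0$. The cases $n = 2, 3$ are tautological by the very definitions of $\alpha_p$ and $\beta_p$: a direct computation gives $\alpha_p^2 \beta_p c_2 \equiv c_{r+2}$ (using $c_2 = 1$) and $\alpha_p^3 \beta_p c_3 \equiv c_{r+3}$.

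For $n = 4$, I would specialize \eqref{eq:psi_Somos8} at $n = r+3$; the factors $c_{r-1}, c_r, c_{r+1}$ annihilate three of the four terms on the right, leaving $c_3^2 c_6 c_{r+4} c_{r+2} \equiv c_4 c_6 c_{r+3}^2 \pmod{p}$, from which $c_{r+4}$ is solved and checked against $\alpha_p^4 \beta_p c_4$ by routine simplification. The same device applied to \eqref{eq:psi_Somos9} at $n = r+3$ delivers $c_{r+5}$ (one divides by the nonzero $c_3 c_4 c_7 c_{r+2}$), and applied to \eqref{eq:psi_Somos8} at $n = r+4$ delivers $c_{r+6}$; this last step is precisely where the hypothesis $c_4^3 - c_3^3 c_5 \not\equiv 0 \pmod{p}$ is essential, since it is the coefficient by which one divides.

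To reach the remaining cases I would loop back to the other side of $r$: \eqref{eq:psi_Somos8} at $n = r+2$ collapses to $c_{r+6} c_{r-2} \equiv -c_6 c_{r+2}^2 \pmod{p}$, yielding $c_{r-2}$ upon dividing by $c_{r+6}$ (which is nonzero, being equal to $\alpha_p^6 \beta_p c_6$ with $\alpha_p, \beta_p \in \F_p^\times$ and $c_6 \not\equiv 0$). Next, \eqref{eq:psi_Somos8} at $n = r+1$ collapses to $c_4 c_{r+5} c_{r-3} \equiv c_3 c_5 c_{r+4} c_{r-2} \pmod{p}$, giving $c_{r-3}$. Finally, \eqref{eq:psi_Somos10} at $n = r+2$ reduces to $c_4 c_{r+7} c_{r-3} \equiv -c_3 c_4 c_7 c_{r+2}^2 \pmod{p}$, and dividing by the just-computed nonzero $c_{r-3}$ produces $c_{r+7}$, which is verified against $\alpha_p^7 \beta_p c_7$.

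The main obstacle is not the algebra, which is mechanical, but the selection of the right relation at each step. A naive attempt to produce $c_{r+7}$ directly from \eqref{eq:psi_Somos9} at $n = r+4$ would require dividing by $c_5^2 - c_3^2 c_6$, which is not among our hypothesized nonvanishing quantities; routing through \eqref{eq:psi_Somos10} avoids this but forces one to have $c_{r-3}$ already in hand. This is why the derivation must zigzag between positive and negative indices rather than march monotonically outward from $n = 2, 3$, and why one must verify the nonvanishing of each intermediate $c_{r+k}$ before using it as a denominator, an observation that follows at each stage from the fact that the stated formula $c_{r+k} \equiv \alpha_p^k \beta_p c_k$ has already been established for the $k$ in question.
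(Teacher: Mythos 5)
Your proposal is correct and follows essentially the same strategy as the paper: settle $n=-1,0,1$ and $n=2,3$ trivially, then bootstrap the remaining cases by specializing the Somos-type relations of Corollary~\ref{cor:Somos8-11} at indices near $r$, dividing only by quantities the hypotheses guarantee are nonzero. The only (harmless) deviation is in the last two cases, where you obtain $c_{r-3}$ from \eqref{eq:psi_Somos8} at $n=r+1$ and then $c_{r+7}$ from \eqref{eq:psi_Somos10} at $n=r+2$, whereas the paper gets $c_{r+7}$ from \eqref{eq:psi_Somos9} at $n=r+2$ and $c_{r-3}$ from \eqref{eq:psi_Somos9} at $n=r+1$; both routes avoid the problematic division by $c_5^2-c_3^2c_6$ that you correctly flag.
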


\begin{proof}
Since
$c_{r-1} \equiv c_r \equiv c_{r+1} \equiv 0$,
\eqref{eq:period_psi4} holds for $n=-1,0,1$.
Meanwhile,
\eqref{eq:period_psi4} holds for $n=2,3$
by the definitions of $\alpha_p$ and $\beta_p$.

Setting $n=r+3$ in \eqref{eq:psi_Somos8}, we obtain
\[
	0 \equiv c_3^2 c_6 c_{r+4} c_{r+2}
	- c_4 c_6 c_{r+3}^2
\]
since $c_{r-1} \equiv c_r \equiv c_{r+1} \equiv 0$.
By the assumption of Theorem \ref{thm:period_psi3}, we have $c_3 c_6 \not\equiv 0$.
Since \eqref{eq:period_psi4} holds for $n=2,3$ and $c_2 = 1$, we obtain
\[
	c_{r+4} \equiv \frac{c_4 c_{r+3}^2}{c_3^2 c_{r+2}}
	\equiv \frac{c_4 (\alpha_p^3 \beta_p c_3)^2}{c_3^2 \cdot \alpha_p^2 \beta_p c_2}
	\equiv \alpha_p^4 \beta_p c_4.
\]
Hence \eqref{eq:period_psi4} holds for $n=4$.

Setting $n=r+3$ in \eqref{eq:psi_Somos9}, we obtain
\[
	0 \equiv c_3 c_4 c_7 c_{r+5} c_{r+2}
	- c_5 c_7 c_{r+4} c_{r+3}.
\]
By assumption, we have $c_3 c_4 c_7 \not\equiv 0$.
Since \eqref{eq:period_psi4} holds for $n=2,3,4$ and $c_2 = 1$,
we obtain
\[
	c_{r+5} \equiv
	\frac{c_5 c_{r+4} c_{r+3}}{c_3 c_4 c_{r+2}}
	\equiv \frac{c_5 \cdot \alpha_p^4 \beta_p c_4 \cdot \alpha_p^3 \beta_p c_3}
	{c_3 c_4 \cdot \alpha_p^2 \beta_p c_2}
	\equiv \alpha_p^5 \beta_p c_5.
\]
Hence \eqref{eq:period_psi4} holds for $n=5$.

Setting $n=r+4$ in \eqref{eq:psi_Somos8}, we obtain
\[
	0 \equiv (c_4^3 - c_3^3 c_5) c_{r+6} c_{r+2}
	+ c_3^2 c_6 c_{r+5} c_{r+3}
	- c_4 c_6 c_{r+4}^2.
\]
By the assumption of Theorem \ref{thm:period_psi3},
we have $c_4^3 - c_3^3 c_5 \not\equiv 0$.
Since \eqref{eq:period_psi4} holds for $n=2,3,4,5$ and $c_2 = 1$,
we obtain
\[
	c_{r+6}
	\equiv \frac{-c_3^2 c_6 c_{r+5} c_{r+3}
	+ c_4 c_6 c_{r+4}^2}{(c_4^3 - c_3^3 c_5) c_{r+2}}
	\equiv \frac{-\alpha_p^8 \beta_p^2 c_3^3 c_5 c_6
	+ \alpha_p^8 \beta_p^2 c_4^3 c_6}{(c_4^3 - c_3^3 c_5) \alpha_p^2 \beta_p c_2}
	\equiv \alpha_p^6 \beta_p c_6.
\]
Hence \eqref{eq:period_psi4} holds for $n=6$.

Setting $n=r+2$ in \eqref{eq:psi_Somos8}, we obtain
\[
	c_4 c_{r+6} c_{r-2}
	\equiv - c_4 c_6 c_{r+2}^2.
\]
By the assumption of Theorem \ref{thm:period_psi3},
we have $c_4 c_6 \not\equiv 0$.
Since $c_{-2} = -c_2 = -1$ and \eqref{eq:period_psi4} holds for $n=2,6$,
we obtain
\[
	c_{r-2} \equiv -\frac{c_6 c_{r+2}^2}{c_{r+6}}
	\equiv -\frac{\alpha_p^4 \beta_p^2 c_2^2 c_6}{\alpha_p^6 \beta_p c_6}
	\equiv \alpha_p^{-2} \beta_p c_{-2}.
\]
Hence \eqref{eq:period_psi4} holds for $n=-2$.

Setting $n=r+2$ in \eqref{eq:psi_Somos9}, we obtain
\[
	c_3 c_5 c_{r+7} c_{r-2}
	\equiv - c_5 c_7 c_{r+3} c_{r+2}.
\]
By the assumption of Theorem \ref{thm:period_psi3},
we have $c_3 c_5\not\equiv 0$.
Since $c_{-2} = -c_2$ and \eqref{eq:period_psi4} holds for $n=-2,2,3$,
\[
	c_{r+7} \equiv -\frac{c_7 c_{r+3} c_{r+2}}{c_3 c_{r-2}}
	\equiv -\frac{\alpha_p^5 \beta_p^2 c_2 c_3 c_7}{\alpha_p^{-2} \beta_p c_3 c_{-2}}
	\equiv \alpha_p^7 \beta_p c_7.
\]
Hence \eqref{eq:period_psi4} holds for $n=7$.

Setting $n=r+1$ in \eqref{eq:psi_Somos9}, we obtain
\[
	c_3 c_5 c_{r+6} c_{r-3}
	\equiv c_3^2 c_6 c_{r+5} c_{r-2}.
\]
By assumption, we have $c_3 c_5 c_6 \not\equiv 0$.
Since $c_{-3} = -c_3$ and \eqref{eq:period_psi4} holds for $n=-2,5,6$,
we obtain
\[
	c_{r-3} \equiv
	\frac{c_3 c_6 c_{r+5} c_{r-2}}{c_5 c_{r+6}}
	\equiv \frac{\alpha_p^3 \beta_p^2 c_{-2} c_3 c_5 c_6}{\alpha_p^6 \beta_p c_5 c_6}
	\equiv \alpha_p^{-3} \beta_p c_{-3}.
\]
Hence \eqref{eq:period_psi4} holds for $n=-3$.

Summarizing the above, we see that
\eqref{eq:period_psi4} holds for $-3\le n\le 7$.
\end{proof}

Next, we shall prove \eqref{eq:psi_modp_identiy} for $k=1$
and for all $n$ by induction:

\begin{claim}
\label{claim:period_psi3:2-1}
For all integers $n \in \Z$, we have
\begin{equation}\label{eq:psi_modp_idntity_k1}
	c_{r+n} \equiv \alpha_p^n \beta_p c_n.    
\end{equation}

\end{claim}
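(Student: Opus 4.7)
The strategy is to extend the base case $-3 \le n \le 7$ (Claim~\ref{claim:period_psi3:2-1-base}) to all $n \in \Z$ by induction on $|n|$, using Cantor's bilinear recurrences \eqref{eq:psi_Somos8}--\eqref{eq:psi_Somos11} evaluated at shifts $N = r + k$ for well-chosen $k$, in the same spirit as the base case.

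For the forward step, I assume \eqref{eq:psi_modp_idntity_k1} for $-3 \le n' \le n-1$ with $n \ge 8$. For $n = 8$ (resp.\ $n = 9$), I apply \eqref{eq:psi_Somos8} (resp.\ \eqref{eq:psi_Somos9}) at $N = r + 5$: the factor $c_{r+1} \equiv 0$ annihilates the LHS, leaving a linear equation for $c_{r+n}$ whose leading coefficient is a nonzero multiple of $c_{r+2}$ by Claim~\ref{claim:r+2,r+3}. For $n \ge 10$, I apply \eqref{eq:psi_Somos8} at $N = r + n - 4$, whose LHS is $c_4 c_{r+n} c_{r+n-8}$; this determines $c_{r+n}$ provided $c_{r+n-8} \not\equiv 0 \pmod{p}$. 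If that coefficient vanishes, I switch to \eqref{eq:psi_Somos9}, \eqref{eq:psi_Somos10}, or \eqref{eq:psi_Somos11} at appropriate shifts, whose leading coefficients involve $c_{r+n-9}$, $c_{r+n-10}$, $c_{r+n-11}$ respectively.

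Once the equation is in hand, the identity $c_{r+n} \equiv \alpha_p^n \beta_p c_n$ follows from a uniform ``index-balance'' property of Somos relations: in each such relation every product $c_{N+i} c_{N+j}$ has the same index sum $i+j$, so substituting the inductive hypothesis rescales every product by the common factor $\alpha_p^{2N+i+j} \beta_p^2$. The shifted relation modulo $p$ then becomes a nonzero scalar multiple of the same Somos relation applied to $\{c_s\}_{s\in\Z}$, which holds identically; solving for the unique new unknown $c_{r+n}$ and matching with the corresponding unshifted formula for $c_n$ yields the claim.

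The backward step ($n \le -4$) is symmetric. Alternatively, evaluating \eqref{eq:Somos_even} at $n = r$ and simplifying using $c_{r-1} \equiv c_r \equiv c_{r+1} \equiv 0$ together with the base case yields the auxiliary product identity $c_{r+m} c_{r-m} \equiv -\beta_p^2 c_m^2 \pmod{p}$ for all $m \in \Z$, which reduces the backward direction to the forward one whenever $c_m \not\equiv 0$ (the case $c_m \equiv 0$ is automatic, since then $(r-m)\overline{D_P} = -m\overline{D_P} \in \overline{\Theta}$ forces $c_{r-m} \equiv 0$ as well). The main obstacle is the combinatorial verification that, for every $n$, at least one of the four Somos relations has a nonzero leading coefficient. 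This follows from Theorem~\ref{thm:Theta}(2), which forces three consecutive vanishings $c_{j-1} \equiv c_j \equiv c_{j+1} \equiv 0 \pmod{p}$ to occur only when $r \mid j$; since $r \ge 9$ by Claim~\ref{claim:rgeq9}, no four consecutive indices $n-11$, $n-10$, $n-9$, $n-8$ can simultaneously have $c_{\cdot} \equiv 0 \pmod{p}$, completing the induction.
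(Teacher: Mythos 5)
Your proposal is correct and follows essentially the same route as the paper: an eleven-term sliding window starting from the base case $-3\le n\le 7$, advanced one step at a time by whichever of the Somos $8$--$11$ relations has a non-vanishing leading coefficient, with the index-balance (homogeneity) of each relation giving the uniform rescaling by $\alpha_p^{2N+i+j}\beta_p^2$; your appeal to Theorem~\ref{thm:Theta}(2) together with $r\ge 9$ to exclude four consecutive vanishing terms is an equivalent substitute for the paper's direct use of Lemma~\ref{lem:consecutive4values}. The one genuinely different ingredient is your treatment of $n\le -4$ via the auxiliary identity $c_{r+m}c_{r-m}\equiv -\beta_p^2 c_m^2$ obtained from \eqref{eq:Somos_even} at $n=r$, which reduces the backward direction to the forward one rather than running the symmetric downward induction as the paper does; this is a valid and somewhat slicker shortcut, at the cost of a small amount of bookkeeping in the case $c_m\equiv 0$ (where you need the symmetry of $\Theta$ and must check the small values $|r-m|\le 3$ separately, all of which are ruled out by the base case).
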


\begin{proof}
Suppose that \eqref{eq:psi_modp_idntity_k1} holds for $m\le n\le m+10$ for some $m \ge -3$.
We shall prove that the assertion holds for $n=m+11$.
By Lemma \ref{lem:consecutive4values}, at least
one of $c_m$, $c_{m+1}$, $c_{m+2}$ or $c_{m+3}$
is not congruent to $0$ modulo $p$.
So it is enough to consider the following four cases:
\begin{itemize}
    \item $c_m\not\equiv 0$ 
    \item $c_{m+1}\not\equiv 0$
    \item $c_{m+2}\not\equiv 0$
    \item $c_{m+3}\not\equiv 0$
\end{itemize}

We first consider the case $c_m\not\equiv 0$.
From \eqref{eq:psi_Somos11} for $n = m+5$,
we have
\begin{align}
	\label{eq:psi_Somos11a}
	c_3 c_5 c_{m+11} c_m
	&= \sum_{i=0}^3 S_i c_{m+6+i} c_{m+5-i},
\end{align}
where
\[
S_0 \coloneqq - c_3 c_5 c_8, \quad
S_1 \coloneqq c_3 (c_3 c_4 c_8 - c_9), \quad
S_2 \coloneqq c_5^2 c_6 - c_3 c_4^2 c_7, \quad
S_3 \coloneqq c_3 c_4 c_7.
\]
Similarly, from \eqref{eq:psi_Somos11} for $n = r+m+5$,
we have
\begin{align}
 \label{eq:psi_Somos11b}
	c_3 c_5 c_{r+m+11} c_{r+m}
	&= \sum_{i=0}^3 S_i c_{r+m+6+i} c_{r+m+5-i}
\end{align}
where $S_0$, $S_1$, $S_2$, $S_3$ are the same constants as above.

By \eqref{eq:psi_Somos11a},
since $c_3 c_5 c_m \not\equiv 0$,
we have
\[
	c_{m+11}
	\equiv \frac{1}{c_3 c_5 c_m} \sum_{i=0}^3 S_i c_{m+6+i} c_{m+5-i}.
\]
On the other hand,
by the induction hypothesis,
we have $c_{r+n} \equiv \alpha_p^n \beta_p c_n$ for $m\le n \le m+10$.
Hence, by \eqref{eq:psi_Somos11b}, we obtain
\begin{align*}
	c_{r+m+11}
	&\equiv \frac{1}{c_3 c_5 c_{r+m}}
    \sum_{i=0}^3 S_i c_{r+m+6+i} c_{r+m+5-i} \\
	&\equiv \frac{1}{\alpha_p^m \beta_p c_3 c_5 c_m}
    \sum_{i=0}^3 S_i \cdot \alpha_p^{m+6+i} \beta_p c_{m+6+i} \cdot \alpha_p^{m+5-i} \beta_p c_{m+5-i} \\
	&\equiv \frac{1}{\alpha_p^m \beta_p c_3 c_5 c_m}
	\sum_{i=0}^3 S_i \alpha_p^{2m+11} \beta_p^2 \cdot c_{m+6+i} c_{m+5-i} \\
    &\equiv \frac{\alpha_p^{m+11} \beta_p}{c_3 c_5 c_m}
	\sum_{i=0}^3 S_i c_{m+6+i} c_{m+5-i}.
\end{align*}
Comparing two equations, we have
\[
	c_{r+m+11} \equiv \alpha_p^{m+11} \beta_p c_{m+11}  \pmod{p},
\]
and thus \eqref{eq:psi_modp_idntity_k1} is true for $n = m+11$.

The other cases are proved in a similar manner.
Note that when
$c_{m+1}\not\equiv 0$,
$c_{m+2}\not\equiv 0$,
$c_{m+3}\not\equiv 0$,
we shall use
\eqref{eq:psi_Somos10},
\eqref{eq:psi_Somos9},
\eqref{eq:psi_Somos8},
respectively.
By induction, \eqref{eq:psi_modp_idntity_k1}
holds for all $n\ge -3$.

The assertion for $n\le -4$ is proved by similar arguments.
Let $m \leq -4$ and assume that the assertion holds for every $n > m$.
By Lemma \ref{lem:consecutive4values}, at least
one of $c_{m+8}$, $c_{m+9}$, $c_{m+10}$ or $c_{m+11}$
is not congruent to $0$ modulo $p$.
So it is enough to consider the following four cases:
\begin{itemize}
    \item $c_{m+8}\not\equiv 0$ 
    \item $c_{m+9}\not\equiv 0$
    \item $c_{m+10}\not\equiv 0$
    \item $c_{m+11}\not\equiv 0$
\end{itemize}
When $c_{m+11}\not\equiv 0$,
we obtain
\begin{align*}
	c_m
	&= \frac{1}{c_3 c_5 c_{m+11}} \sum_{i=0}^3 S_i c_{m+6+i} c_{m+5-i}
\end{align*}
from \eqref{eq:psi_Somos11} for $n = m+5$.
Thus, we prove the assertion for $c_m$ from the assertions for $c_n$ for $n > m$.
Similarly, when
$c_{m+10}\not\equiv 0$,
$c_{m+9}\not\equiv 0$,
$c_{m+8}\not\equiv 0$,
we shall use
\eqref{eq:psi_Somos10},
\eqref{eq:psi_Somos9},
\eqref{eq:psi_Somos8},
respectively.
\end{proof}

Next, we shall prove part (3) of Theorem \ref{thm:period_psi3}.

\begin{claim}
\label{claim:period_psi3:3}
$\alpha_p^r = \beta_p^2 \in \F_p$.
\end{claim}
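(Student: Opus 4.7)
The plan is to exploit the identity $c_{r+n} \equiv \alpha_p^n \beta_p c_n \pmod{p}$ established in Claim \ref{claim:period_psi3:2-1} together with the elementary symmetry $c_{-n} = -c_n$, which is immediate from the convention $\psi_{-n}(X) = -\psi_n(X)$. The strategy is to compute $c_{r-n}$ in two different ways, using the identity as a shift-by-$r$ operator in each direction, and then exploit the $c_n \leftrightarrow -c_{-n}$ symmetry to force a relation between $\alpha_p^r$ and $\beta_p^2$.

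Concretely, I would first specialize $n \to -n$ in the identity to obtain
\[
c_{r-n} \equiv \alpha_p^{-n}\beta_p\, c_{-n} \equiv -\alpha_p^{-n}\beta_p\, c_n.
\]
Next, I would specialize $n \to n-r$ in the identity to obtain $c_n \equiv \alpha_p^{n-r}\beta_p\, c_{n-r}$. Since $\alpha_p, \beta_p \in \F_p^\times$ by Claim \ref{claim:r+2,r+3}, this rearranges to
\[
c_{n-r} \equiv \alpha_p^{r-n}\beta_p^{-1}\, c_n.
\]
Using $c_{n-r} = -c_{r-n}$ and equating the two expressions for $c_{r-n}$ then gives
\[
\alpha_p^{-n}\beta_p\, c_n \equiv \alpha_p^{r-n}\beta_p^{-1}\, c_n \pmod{p}.
\]

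Finally, I would pick any $n$ for which $c_n \not\equiv 0 \pmod{p}$; the simplest choice is $n = 2$, where $c_2 = 1$. Cancelling $c_n$ then reduces the relation to $\alpha_p^{-2}\beta_p \equiv \alpha_p^{r-2}\beta_p^{-1}$, i.e., $\alpha_p^r \equiv \beta_p^2$, as desired. There is no real obstacle: the argument is essentially a consistency check that uses only the already-proved shift identity, the invertibility of $\alpha_p,\beta_p$, and the oddness of $\psi_n$. The only mildly delicate bookkeeping is tracking the sign contributed by $c_{-n} = -c_n$ at each application.
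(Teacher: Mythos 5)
Your proposal is correct and is essentially the paper's own argument: the paper likewise applies the full-range shift identity $c_{r+n}\equiv\alpha_p^n\beta_p c_n$ at $n=2$ and at $n=-r-2$, then uses $c_{-2}=-c_2=-1$ and $c_{-r-2}=-c_{r+2}$ to cancel and obtain $\alpha_p^r=\beta_p^2$. Your version is just a reparametrization of the same substitutions, with the same ingredients (oddness of the sequence, invertibility of $\alpha_p,\beta_p$, and $c_2=1$).
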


\begin{proof}
Setting $n=2$ and $n=-r-2$ in \eqref{eq:period_psi4}, we have
\[
	c_{r+2} \equiv \alpha_p^2 \beta_p c_2, \quad
	c_{-2} \equiv \alpha_p^{-r-2} \beta_p c_{-r-2}.
\]
Since $c_{-2} = -c_2 = -1$ and $c_{-r-2} = -c_{r+2}$, we have
$\alpha_p^r = \beta_p^2$ in $\F_p$.
\end{proof}

Finally, we prove \eqref{eq:psi_modp_identiy} for all integers $k \in \Z$.

\begin{claim}
For all integers $n$ and $k$, we have
\[ c_{kr+n} \equiv \alpha_p^{kn} \beta_p^{k^2} c_n. \]
\end{claim}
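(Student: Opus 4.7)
The plan is to induct on $|k|$, taking Claim~\ref{claim:period_psi3:2-1} as the base case and Claim~\ref{claim:period_psi3:3} as the fuel that makes the exponents line up correctly at each step. The cases $k=0$ and $k=1$ are immediate: for $k=0$ the asserted congruence reduces to $c_n\equiv c_n$, and for $k=1$ it is exactly Claim~\ref{claim:period_psi3:2-1}.

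For the step from $k\ge 1$ to $k+1$, I would rewrite $c_{(k+1)r+n}=c_{kr+(r+n)}$ and apply the induction hypothesis with $n$ replaced by $r+n$, then apply Claim~\ref{claim:period_psi3:2-1} to the resulting factor $c_{r+n}$:
\[
 c_{(k+1)r+n}\equiv \alpha_p^{k(r+n)}\beta_p^{k^2}\,c_{r+n}\equiv \alpha_p^{k(r+n)}\beta_p^{k^2}\cdot\alpha_p^{n}\beta_p\,c_n=\alpha_p^{kr+(k+1)n}\beta_p^{k^2+1}c_n.
\]
Comparing with the target $\alpha_p^{(k+1)n}\beta_p^{(k+1)^2}c_n$, what remains is the identity $\alpha_p^{kr}=\beta_p^{2k}$, which is obtained by raising $\alpha_p^{r}=\beta_p^{2}$ (Claim~\ref{claim:period_psi3:3}) to the $k$-th power. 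This closes the induction for $k\ge 0$.

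For negative $k$ I would run the same induction downward: the identity $c_{(k-1)r+n}=c_{kr+(n-r)}$ together with Claim~\ref{claim:period_psi3:2-1} (used in the form $c_{n-r}\equiv \alpha_p^{-n}\beta_p^{-1}c_n\cdot\beta_p^{2}/\beta_p^{2}$ after again invoking $\alpha_p^r=\beta_p^2$) gives the step $k\mapsto k-1$; equivalently, one can exploit the odd symmetry $c_{-m}=-c_m$ together with the already-proved case $k\ge 0$ by writing
\[
 c_{-kr+n}=-c_{kr-n}\equiv -\alpha_p^{k(-n)}\beta_p^{k^2}c_{-n}=\alpha_p^{(-k)n}\beta_p^{(-k)^2}c_n,
\]
so the negative-$k$ assertion follows from the positive-$k$ one with no additional input.

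There is no genuine obstacle here beyond keeping track of exponents; the whole content is packaged in Claims~\ref{claim:period_psi3:2-1} and~\ref{claim:period_psi3:3}. The only place where care is needed is the reconciliation of the extra factor $\alpha_p^{kr}\beta_p$ accumulated at each inductive step with the quadratic exponent $k^2$ in $\beta_p$, and this is exactly what the relation $\alpha_p^{r}=\beta_p^{2}$ is designed to do, since it converts $\alpha_p^{kr}$ into $\beta_p^{2k}$, completing the telescoping $\beta_p^{k^2+1}\cdot\beta_p^{2k}=\beta_p^{(k+1)^2}$.
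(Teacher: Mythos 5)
Your proposal is correct and follows essentially the same route as the paper: induct upward on $k$ using the decomposition $c_{(k+1)r+n}=c_{kr+(r+n)}$, invoke the $k=1$ case for $c_{r+n}$, use $\alpha_p^{r}=\beta_p^{2}$ to convert $\alpha_p^{kr}$ into $\beta_p^{2k}$, and handle negative $k$ via the odd symmetry $c_{-m}=-c_m$. No substantive differences.
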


\begin{proof}
By Claim \ref{claim:period_psi3:2-1}, the assertion holds for $k = 1$.
We shall prove the assertion by induction on $k$.
Assume that the assertion holds for some $k$.
Then we have
\[
	c_{(k+1)r+n} = c_{kr+(r+n)}
	\equiv \alpha_p^{k(r+n)} \beta_p^{k^2} c_{r+n}.
\]
Since $\alpha_p^r = \beta_p^2 \in \F_p$ by Claim \ref{claim:period_psi3:3},
we have
\[
	\alpha_p^{k(r+n)} \beta_p^{k^2} c_{r+n}
 \equiv (\beta_p^2)^k \alpha_p^{kn} \beta_p^{k^2} c_{r+n}
  \equiv \alpha_p^{kn} \beta_p^{k^2 + 2k} c_{r+n}.
\]
By the assertion for $k=1$, we have
$c_{r+n} \equiv \alpha_p^n \beta_p c_n$.
Hence we have
\[
\alpha_p^{kn} \beta_p^{k^2 + 2k} c_{r+n}
\equiv \alpha_p^{kn} \beta_p^{k^2 + 2k} \cdot \alpha_p^n \beta_p c_n
\equiv \alpha_p^{(k+1)n} \beta_p^{(k+1)^2} c_r.
\]
The assertion is proved for $k+1$.
By induction, the assertion is proved for all $k \geq 1$.

Since we have
\[ c_{-kr+n}
\equiv - c_{kr-n}
\equiv - \alpha_p^{k \cdot (-n)} \beta_p^{k^2} c_{-n}
\equiv \alpha_p^{(-k) \cdot n} \beta_p^{(-k)^2} c_{n},
\]
the assertion for $k < 0$ follows.
\end{proof}

The proof of Theorem \ref{thm:period_psi3} is complete.

\section{Proof of the main theorems}

We are now ready to prove Theorem~\ref{MainTheorem} and Theorem~\ref{MainTheorem2}.

\begin{proof}[Proof of Theorem~\ref{MainTheorem}]
Let $p$ be a prime satisfying the assumption in Theorem~\ref{thm:period_psi3}.
Substituting $k=p-1$ in Theorem~\ref{thm:period_psi3} (2), we have
\[
	c_{(p-1)r+n} \equiv \alpha_p^{(p-1)n} \beta_p^{(p-1)^2} c_n
	\equiv c_n \pmod{p}
\]
for all integers $n \in \Z$.
Hence $\{ c_n \pmod{p} \}_{n\in\mathbb{Z}}$ is periodic, and the period $\Per_p(\seq{c})$ is a divisor of
$(p-1)r = (p-1) \ord_p(D_P)$.

Next, we shall prove that $r = \ord_p(D_P)$ divides $s \coloneqq \Per_p(\seq{c})$.
Since $c_{-1} = c_{1} = c_{1} = 0$ and $c_2 = 1$,
we have $s \geq 4$.
Recall that
$y_P \not\equiv 0 \pmod{p}$.
Since $s$ is the period of the reduction modulo $p$ of the sequence $\seq{c}$,
we have $c_{s+i} \equiv c_i \equiv 0 \pmod{p}$
for $i = -1, 0, 1$.
Therefore, by Theorem~\ref{thm:Theta} (2),
we obtain $s \overline{D_P} = 0$ in $\Jac(C)(\F_p)$.
Hence $r$ divides $s$.
\end{proof}

\begin{proof}[Proof of Theorem~\ref{MainTheorem2}]
Let $r \coloneqq \ord_p(D_P)$, $s \coloneqq \Per_p(\seq{c})$, and $k \coloneqq s/r$.
By Theorem~\ref{MainTheorem} (2), $k$ is a positive integer.
By Theorem~\ref{thm:period_psi3} (2), we have
$c_{dr+n} \equiv c_n \pmod{p}$ for all integers $n\in\Z$.
Hence we have $s = kr \mid dr$, which implies $k \mid d$.

Setting $n=2, 3$ in the relation in Theorem~\ref{thm:period_psi3} (2), we have
\[
	c_{kr+2} \equiv \alpha_p^{2k} \beta_p^{k^2} c_2 \pmod{p}, \quad
	c_{kr+3} \equiv \alpha_p^{3k} \beta_p^{k^2} c_3 \pmod{p}.
\]
Since $s=kr$ is the period and $c_2, c_3 \not\equiv 0 \pmod{p}$, we have
\[
	\alpha_p^k \equiv \beta_p^{k^2} \equiv 1 \pmod{p}.
\]
Hence we obtain $d \mid k$ since $d$ is the least positive integer satisfying such a condition (see \cite[Lemma~10.1]{Ward}).
Therefore, we have $d=k$, which implies $\Per_p(\seq{c}) = d \ord_p(D_P)$.
\end{proof}

As we mentioned in Remark \ref{rem:piegonhole},
we can prove Theorem~\ref{MainTheorem} (1) and a half of Theorem~\ref{MainTheorem} (2)
by using the pigeonhole principle instead of using Theorem~\ref{thm:period_psi3}:

\begin{prop}
\label{prop:pigeonhole}
Let $p$ be an odd prime which divides neither
$\mathrm{disc}(F)$ nor $c_3 c_4 c_5$.
Then the reduction modulo $p$ of the sequence $\seq{c}$ is periodic,
and we have
$\ord_p(D_P) \mid \Per_p(\seq{c})$.
\end{prop}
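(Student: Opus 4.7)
The plan is to encode the sequence $\{c_n \bmod p\}_{n\in\Z}$ as the orbit of a discrete dynamical system on the finite set $\F_p^{11}$ and then invoke the pigeonhole principle. First I would record that from $p \nmid c_3 = 4 F(x_P) = 4 y_P^2$ and the assumption $p \neq 2$, one has $y_P \not\equiv 0 \pmod p$, so that both Theorem~\ref{thm:Theta} and Lemma~\ref{lem:consecutive4values} apply after reduction modulo $p$. I would then define the state at index $n$ to be the $11$-tuple
\[
    S_n \coloneqq (c_n, c_{n+1}, \ldots, c_{n+10}) \bmod p \in \F_p^{11}.
\]

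The crux will be to exhibit a well-defined shift $T\colon S_n \mapsto S_{n+1}$ together with a well-defined inverse. To compute $c_{n+11}$ from $S_n$, Lemma~\ref{lem:consecutive4values} guarantees that at least one of $c_n, c_{n+1}, c_{n+2}, c_{n+3}$ is nonzero modulo $p$, and in each case I would invoke a Somos relation from Corollary~\ref{cor:Somos8-11}: if $c_{n+3}\not\equiv 0$, apply \eqref{eq:psi_Somos8} with index shift $n\mapsto n+7$; if $c_{n+2}\not\equiv 0$, apply \eqref{eq:psi_Somos9} with shift $n+6$; if $c_{n+1}\not\equiv 0$, apply \eqref{eq:psi_Somos10} with shift $n+6$; and if $c_n\not\equiv 0$, apply \eqref{eq:psi_Somos11} with shift $n+5$. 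In every case the left-hand side has the form $\gamma\cdot c_{n+11} c_{n+j}$ with $\gamma\in\{c_4,\,c_3 c_5\}$ invertible by the hypothesis $p\nmid c_3 c_4 c_5$, while a routine check shows that the right-hand side only involves entries of $S_n$ together with the fixed constants $c_3,\ldots,c_9$; hence $c_{n+11}$, and therefore $S_{n+1}$, is determined by $S_n$. An entirely analogous argument, applied to the window $(c_{n+7},c_{n+8},c_{n+9},c_{n+10})$ and solving the same four Somos relations for $c_{n-1}$, shows that $S_{n-1}$ is determined by $S_n$; thus $T$ is a bijection on the set of states actually visited by the orbit.

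Pigeonhole applied to the finite set $\F_p^{11}$ then produces indices $n_1<n_2$ with $S_{n_1}=S_{n_2}$, and iterating $T$ and $T^{-1}$ yields $c_{n+(n_2-n_1)}\equiv c_n\pmod p$ for all $n\in\Z$, proving periodicity with $\Per_p(\seq{c})\le p^{11}$. For the divisibility $\ord_p(D_P)\mid\Per_p(\seq{c})$, I would set $s\coloneqq\Per_p(\seq{c})$: since $c_{-1}=c_0=c_1=0$, periodicity forces $c_{s-1}\equiv c_s\equiv c_{s+1}\equiv 0\pmod p$, and Theorem~\ref{thm:Theta}(2) (applicable since $y_P\not\equiv 0$) gives $s\overline{D_P}=0$ in $\Jac(C)(\F_p)$, whence $\ord_p(D_P)\mid s$. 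The main technical obstacle is verifying that the four Somos relations together cover every case produced by Lemma~\ref{lem:consecutive4values} while keeping the right-hand side inside a window of only eleven consecutive indices; this is precisely what forces the state length to be $11$ and what explains the appearance of all three factors $c_3,c_4,c_5$ in the hypothesis.
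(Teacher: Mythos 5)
Your proposal is correct and follows essentially the same route as the paper: encode eleven consecutive terms as a state, use Lemma~\ref{lem:consecutive4values} together with the four Somos relations of Corollary~\ref{cor:Somos8-11} (with $p\nmid c_3c_4c_5$ making the leading coefficients invertible) to show the state determines both its successor and its predecessor, apply the pigeonhole principle, and deduce $\ord_p(D_P)\mid\Per_p(\seq{c})$ from $c_{s-1}\equiv c_s\equiv c_{s+1}\equiv 0$ and Theorem~\ref{thm:Theta}\,(2). Your index bookkeeping for which Somos relation covers which nonvanishing case checks out, and in fact spells out details the paper leaves implicit.
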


\begin{proof}
By Lemma \ref{lem:consecutive4values}, there exists no integer $m$ such that
\[
  c_m \equiv c_{m+1} \equiv c_{m+2} \equiv c_{m+3} \equiv 0\pmod{p}. \]
Since $c_3 c_4 c_5 \not\equiv 0 \pmod{p}$,
by the bilinear recurrence relations of Somos $8$, $9$, $10$ and $11$ type
in Corollary~\ref{cor:Somos8-11},
the values
$c_{m+11} \pmod p$ and $c_{m-1} \pmod p$ are uniquely determined by the values $c_{m+i} \pmod p$ for $0\le i\le 10$.
By the pigeonhole principle, there exist an integer $k \in \Z$ and a positive integer $s \geq 1$ such that $c_{s+k+i} \equiv c_{k+i} \pmod{p}$ for $0\le i\le 10$.
Thus, we obtain $c_{n+s} \equiv c_n \pmod{p}$ for all $n \in \Z$ by induction.

The proof of ``$\ord_p(D_P) \mid \Per_p(\seq{c})$''
is the same as Theorem~\ref{MainTheorem} (2).
(Note that the proof of ``$\ord_p(D_P) \mid \Per_p(\seq{c})$''
does not require Theorem~\ref{thm:period_psi3}.)
\end{proof}

\begin{rem}
In contrast to Theorem \ref{MainTheorem}, in the above proof of Proposition \ref{prop:pigeonhole},
we do not require the assumption that $c_6 c_7 (c_4^3 - c_3^3 c_5) \not\equiv 0 \pmod{p}$.
However, the upper bound for the period $\Per_p(\seq{c})$
we can obtain from the pigeonhole principle is $p^{11}$,
which is much larger than the upper bound in Corollary~\ref{Corollary:MainTheorem}.
In particular, without Theorem~\ref{thm:period_psi3},
it seems difficult to prove the divisibility
``$\Per_p(\seq{c}) \mid (p-1) \ord_p(D_P)$.''
\end{rem}

\appendix

\section{Proof of Theorem~\ref{thm:analytic_psi}}
\label{sec:analytic_psi}

In this appendix, we give a proof of Theorem~\ref{thm:analytic_psi}.
This result essentially follows from the description of Cantor's division polynomials
in \cite[Appendix]{Onishi}.
However, the sign in the formula in \cite[Theorem~A~1]{Onishi} is incorrect.
In fact, the sign $(-1)^{(2n-g)(g-1)/2}$ in \cite[Proposition~8.2 (ii)]{Onishi} should be replaced by $(-1)^{(n-g-1)(n+g^2+2g)/2}$ as in \cite[Theorem~5.1]{Uchida}.
Moreover, the sign $(-1)^{r(r-1)/2}$ in \cite[p.~738]{Onishi} should be read $(-1)^{(r-g)(r-g+1)/2}$.
Here we supply necessary arguments
to correct the sign errors in the literature.

For details on the hyperelliptic sigma function, we refer the readers to \cite{BEL} and references therein.
We adopt the definitions in \cite{Onishi2002,Onishi}.
In an expression for the Laurent expansion of a function, the symbol $(d^\circ(z_1,z_2,\ldots,z_m)\ge n)$ stands for the terms of total degree at least $n$ with respect to the variables $z_1,z_2,\ldots,z_m$.

We define differential forms
\[
	\omega_1 \coloneqq \frac{dX}{2Y}, \quad
	\omega_2 \coloneqq \frac{XdX}{2Y}, \quad
	\eta_1 \coloneqq \frac{(3X^3+2a_1X^2+a_2X)dX}{2Y}, \quad
	\eta_2 \coloneqq \frac{X^2dX}{2Y}.
\]
Let $\{ \alpha_1, \alpha_2, \beta_1, \beta_2 \}$ be a symplectic basis of $H_1(C(\C),\Z)$.
We define $2\times 2$ matrices by
\[
\omega' \coloneqq 
\begin{pmatrix}
\int_{\alpha_1} \omega_1 & \int_{\alpha_2} \omega_1 \\*[3mm]
\int_{\alpha_1} \omega_2 & \int_{\alpha_2} \omega_2
\end{pmatrix},
\qquad
\omega'' \coloneqq 
\begin{pmatrix}
\int_{\beta_1} \omega_1 & \int_{\beta_2} \omega_1 \\*[3mm]
\int_{\beta_1} \omega_2 & \int_{\beta_2} \omega_2
\end{pmatrix},
\]
\[
\eta' \coloneqq 
\begin{pmatrix}
\int_{\alpha_1} \eta_1 & \int_{\alpha_2} \eta_1 \\*[3mm]
\int_{\alpha_1} \eta_2 & \int_{\alpha_2} \eta_2
\end{pmatrix},
\qquad
\eta'' \coloneqq 
\begin{pmatrix}
\int_{\beta_1} \eta_1 & \int_{\beta_2} \eta_1 \\*[3mm]
\int_{\beta_1} \eta_2 & \int_{\beta_2} \eta_2
\end{pmatrix},
\]
which are called the \textit{period matrices}.

We define the \emph{hyperelliptic sigma function} by
\[
	\sigma(u) \coloneqq c \exp\left(-\frac{1}{2}
	\prescript{t}{}{u} \, \eta' \, \omega'^{-1} \, u \right)\vartheta
	\begin{bmatrix} \delta'' \\ \delta' \end{bmatrix}
	(\omega'^{-1} u,\ \omega'^{-1} \omega''),
\]
where $u = \begin{pmatrix} u_1 \\ u_2 \end{pmatrix} \in \C^2$,
$c$ is some constant, $\delta', \delta''$ are the Riemann constants, and $\vartheta$ is the Riemann theta function with characteristics.
The constant $c$ is determined so that the following lemma holds.
For details, see \cite[Lemma~1.2]{Onishi2002} and the references cited there.
\begin{lem}\label{lem:expansion1}
The function $\sigma(u)$ has the Taylor expansion
\[
	\sigma(u) = u_1 + \frac{1}{6} a_2 u_1^3 - \frac{1}{3} u_2^3 + (d^\circ(u_1,u_2)\ge 5)
\]
at $u = \begin{pmatrix} 0 \\ 0 \end{pmatrix}$.
\end{lem}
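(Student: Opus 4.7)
This is essentially \cite[Lemma~1.2]{Onishi2002}; I sketch the main ingredients in the present normalization. By the Riemann vanishing theorem applied to the characteristic $\bigl[\delta''; \delta'\bigr]$ of the Riemann constants based at $\infty$, the theta factor in the definition of $\sigma(u)$ vanishes at $u=0$ to order exactly one (for genus $2$, the theta divisor is smooth, being isomorphic to $C$ via the Abel--Jacobi map). Since the exponential prefactor is holomorphic and non-zero at $0$, $\sigma$ itself vanishes to order one at the origin, and the constant $c$ in the definition of $\sigma$ is \emph{by definition} the one that normalizes the linear term to be $u_1$ in the coordinates used below.

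The linear coefficient and the $u_2^3$ coefficient are then read off from the parametrization of $\Theta$ near $0$ coming from $C$ at $\infty$. Using the local parameter $t$ with $1/X = t^2$ and $Y = t^{-5}(1+\tfrac{1}{2}a_4 t^2+\cdots)$, a direct expansion of the integrals of $\omega_1, \omega_2$ yields
\[
	u_1(t) = -\tfrac{1}{3}t^3 + \tfrac{a_4}{10}t^5 + O(t^7),\qquad u_2(t) = -t + \tfrac{a_4}{6}t^3 + O(t^5).
\]
Thus the tangent to $\Theta$ at $0$ is the $u_2$-axis, and inverting shows that $\Theta$ has local equation $u_1 - \tfrac{1}{3}u_2^3 + O(u_2^5) = 0$. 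Writing $\sigma(u) = \bigl(u_1 - \tfrac{1}{3}u_2^3 + \cdots\bigr)\, h(u)$ with $h$ an analytic unit normalized by $h(0) = 1$ forces the coefficient of $u_1$ to be $1$ and the coefficient of $u_2^3$ to be $-\tfrac{1}{3}$.

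To identify the remaining cubic coefficient $a_2/6$ and to see that the monomials $u_1^2 u_2$ and $u_1 u_2^2$ do not appear, I would use the oddness $\sigma(-u) = -\sigma(u)$, which follows from the hyperelliptic involution acting as $u \mapsto -u$ on $\Jac(C)$ combined with the parity of the genus-$2$ Riemann constant; this forces $h$ to be even, so no even-degree terms appear in $\sigma$. Sato-type weighted homogeneity with $w(u_1) = 3$, $w(u_2) = 1$, $w(a_i) = -2(5-i)$, under which $\sigma$ is homogeneous of weight $3$, further constrains the shape of the degree-$3$ contribution coming from $h$: each candidate coefficient is a polynomial in the $a_i$ of the appropriate weight. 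Pinning down that only the $a_2 u_1^3$ contribution survives, with numerical factor exactly $1/6$, comes from the explicit Taylor expansion of the Riemann theta function at the vanishing point, which is the content of Onishi's lemma. The main obstacle is this last step: extracting the precise numerical coefficient $1/6$ and confirming the vanishing of the other cubic candidates requires a classical but nontrivial theta-function identity in genus $2$; the earlier ingredients (order-one vanishing, tangent direction, normalization of $c$, oddness, and weighted homogeneity) are structural and follow from general principles.
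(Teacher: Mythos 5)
The paper does not actually prove this lemma: the constant $c$ in the definition of $\sigma$ is \emph{defined} so that the expansion holds, and the statement itself is quoted from \cite[Lemma~1.2]{Onishi2002}. Your sketch therefore goes further than the paper does, and every part of it that can be checked against the paper is correct: the order-one vanishing of the theta factor at $u=0$ (smoothness of $\Theta\cong C$ in genus $2$), the local expansions $u_1=-\tfrac13 t^3+\tfrac{a_4}{10}t^5+\cdots$ and $u_2=-t+\tfrac{a_4}{6}t^3+\cdots$ (which reproduce the first, third and fourth formulas of Lemma~\ref{lem:expansion2}, including the signs of $x_P$ and $y_P$), the resulting identification of the linear term with $u_1$ and of the coefficient of $u_2^3$ with $-\tfrac13$, and the use of oddness of $\sigma$ to exclude all even-degree terms. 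What remains open is exactly what you flag: the weight count does not by itself kill the cubic cross terms (e.g.\ $a_4u_1u_2^2$, and $a_3u_1^2u_2$ or $a_4^2u_1^2u_2$, are weight-admissible), nor does it single out $a_2u_1^3$ among the weight-$(-6)$ candidates $a_2$, $a_3a_4$, $a_4^3$; pinning the cubic part down to $\tfrac16 a_2u_1^3-\tfrac13 u_2^3$ requires the explicit Taylor expansion of the theta function, which is precisely the content of the reference the paper cites. So your proposal is a correct structural reduction of the lemma to \cite[Lemma~1.2]{Onishi2002} rather than an independent proof of it, which puts it on the same footing as (indeed slightly ahead of) the paper's own treatment.
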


We also use the following lemmas.
\begin{lem}\label{lem:expansion2}
Let $P=(x_P,y_P)\in C(\C)$ and
\[
	u =
    \begin{pmatrix} u_1 \\ u_2 \end{pmatrix}
    =
    \begin{pmatrix}
	\int_\infty^{P} \omega_1 \\*[3mm] \int_\infty^{P} \omega_2
	\end{pmatrix}.
\]
Assume that $u$ is in a neighborhood of
$\begin{pmatrix} 0 \\ 0 \end{pmatrix}$.
Then we have
\begin{align}
	u_1 &= \frac{1}{3} u_2^3 + (d^\circ(u_2) \ge 4), \\
	\sigma_2(u) &= -u_2^2 + (d^\circ(u_2) \ge 3), \\
	x_P &= \frac{1}{u_2^2} + (d^\circ(u_2) \ge -1), \\
	y_P &= -\frac{1}{u_2^5} + (d^\circ(u_2) \ge -4).
\end{align}
\end{lem}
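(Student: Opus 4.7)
The plan is to analyze the behavior near the point at infinity via a local uniformizer. Because $\infty$ is a Weierstrass point on $C \colon Y^2 = F(X)$ with $\deg F = 5$, the function $X$ has a double pole and $Y$ has a pole of order $5$ at $\infty$. I would first fix a local parameter $t$ at $\infty$ so that
\[
X = \frac{1}{t^2} + (d^\circ(t)\ge -1), \qquad Y = -\frac{1}{t^5} + (d^\circ(t)\ge -4),
\]
where the existence and uniqueness of such an expansion (once the sign of $Y$ is chosen) is obtained by comparing coefficients recursively in $Y^2 = F(X)$. The sign of $Y$ is pinned down so that the leading term of $y_P$ in the statement of the lemma comes out as indicated.

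Next I would expand the holomorphic differentials in terms of $t$. Using $dX = -2t^{-3}\,dt + (d^\circ(t)\ge -2)\,dt$ together with the expansion of $Y$, a direct computation gives
\[
\omega_1 = \frac{dX}{2Y} = \bigl(t^2 + (d^\circ(t)\ge 3)\bigr)\,dt, \qquad \omega_2 = \frac{X\,dX}{2Y} = \bigl(1 + (d^\circ(t)\ge 1)\bigr)\,dt.
\]
Integrating from $\infty$ (at $t=0$) along the path to $P$ (at $t = t(P)$) then yields $u_2 = t(P) + (d^\circ(t(P))\ge 2)$ and $u_1 = t(P)^3/3 + (d^\circ(t(P))\ge 4)$. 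Inverting the series for $u_2$ to obtain $t(P) = u_2 + (d^\circ(u_2)\ge 2)$, and substituting this into the expansions of $X$, $Y$ and $u_1$ in $t$, gives the four claimed formulas for $u_1$, $x_P$ and $y_P$.

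For $\sigma_2(u) = -u_2^2 + (d^\circ(u_2)\ge 3)$, I would differentiate the Taylor expansion of $\sigma$ in Lemma~\ref{lem:expansion1} with respect to $u_2$, obtaining $\sigma_2(u) = -u_2^2 + (d^\circ(u_1,u_2)\ge 4)$, and then use the already-established relation $u_1 = u_2^3/3 + (d^\circ(u_2)\ge 4)$ to eliminate $u_1$; each remaining monomial $u_1^i u_2^j$ with $i+j\ge 4$ becomes $O(u_2^4)$, which is in particular of degree $\ge 3$ in $u_2$. The entire argument is essentially bookkeeping once the local parameter is in place; the only subtle point, and the one most naturally leading to error, is fixing the sign of the branch of $Y$. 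This is precisely the sign that the appendix exists to pin down, since it propagates through to the sign $(-1)^n$ in Theorem~\ref{thm:analytic_psi} and is where the literature cited at the beginning of this appendix went wrong.
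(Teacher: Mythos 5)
Your computation is correct in outline, and it is essentially the standard local-uniformizer argument; note that the paper itself does not prove this lemma but simply cites \cite[Lemmas 1.7, 1.9, and 1.12]{Onishi2002}, so what you have written is a reconstruction of the content of those cited lemmas rather than an alternative to anything argued in this paper. The expansions $X=t^{-2}+(d^\circ(t)\ge -1)$ and $Y=\pm t^{-5}+(d^\circ(t)\ge -4)$ at the Weierstrass point $\infty$, the resulting expansions of $\omega_1$ and $\omega_2$, the inversion $t=u_2+(d^\circ(u_2)\ge 2)$, and the elimination of $u_1$ from $\sigma_2$ via Lemma~\ref{lem:expansion1} (each monomial $u_1^iu_2^j$ with $i+j\ge 4$ has order $3i+j\ge 4$ in $u_2$) all go through exactly as you describe.

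The one step you should not leave as stated is the sign of $Y$. You write that it is ``pinned down so that the leading term of $y_P$ comes out as indicated,'' which as phrased is circular: you are adjusting the normalization to force the conclusion. In fact no such adjustment is available, nor is it needed. Once $t$ is normalized by $X=t^{-2}+\cdots$, the only residual freedom is $t\mapsto -t$, which flips the leading coefficient of $Y$ from $-1$ to $+1$ but simultaneously flips the leading coefficient of $\omega_2=X\,dX/(2Y)$, hence of $u_2$. Concretely, if $Y=\epsilon t^{-5}+\cdots$ with $\epsilon=\pm1$, then $\omega_2=-\epsilon\,dt+\cdots$, so $t=-\epsilon u_2+\cdots$ and $y_P=\epsilon(-\epsilon u_2)^{-5}+\cdots=-u_2^{-5}+\cdots$ for either value of $\epsilon$ (and likewise $u_1=\tfrac13u_2^3+\cdots$ and $x_P=u_2^{-2}+\cdots$ are branch-independent). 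So the sign in the lemma is forced, and this short verification should replace the sentence in which you choose the sign to match the answer; it is exactly the kind of sign bookkeeping that this appendix exists to make explicit. With that repaired, the proof is complete.
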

\begin{proof}
See \cite[Lemmas 1.7, 1.9, and 1.12]{Onishi2002}.
\end{proof}

\begin{lem}\label{lem:Psi}
The polynomial $\psi_n(X)\in\Z[X]$ is of degree $n^2-4$,
and its leading coefficient is $\binom{n+1}{3}$.
\end{lem}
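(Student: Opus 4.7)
The plan is to deduce the degree and leading coefficient of $\psi_n(X)$ from the analytic formula in Theorem~\ref{thm:analytic_psi} by letting the point $P\in C(\C)$ approach $\infty$. Concretely, I would parametrize a neighborhood of $\infty$ by $u_2$, so that via Lemma~\ref{lem:expansion2} the coordinates $x_P,y_P$ and the sigma-function data all expand in Laurent series in $u_2$. Since $x_P$ takes every sufficiently large complex value on an open subset of $C(\C)$, the leading $x_P$-asymptotic of $\psi_n(x_P)$ as $u_2\to 0$ determines the top coefficient of the polynomial $\psi_n(X)$.

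The key calculation is to compute the leading order in $u_2$ of the right-hand side of
\[
\psi_n(x_P) = \frac{(-1)^n \sigma(nu)}{2y_P\,\sigma_2(u)^{n^2}}.
\]
From Lemma~\ref{lem:expansion1}, $\sigma(u)=u_1-\tfrac{1}{3}u_2^3+\tfrac{a_2}{6}u_1^3+(d^\circ(u_1,u_2)\ge 5)$, so evaluating at $nu$ and substituting $u_1=\tfrac{1}{3}u_2^3+O(u_2^4)$ from Lemma~\ref{lem:expansion2} yields
\[
\sigma(nu) = \frac{n}{3}u_2^3 - \frac{n^3}{3}u_2^3 + O(u_2^4) = -\frac{n(n^2-1)}{3}u_2^3 + O(u_2^4).
\]
Lemma~\ref{lem:expansion2} also gives $\sigma_2(u)^{n^2} = (-1)^{n^2} u_2^{2n^2}(1+O(u_2))$ and $2y_P = -2u_2^{-5}(1+O(u_2))$. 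Putting these together, and using that $(-1)^{n-n^2}=(-1)^{-n(n-1)}=1$ because $n(n-1)$ is even, I expect
\[
\psi_n(x_P) = \frac{n(n^2-1)}{6}\,u_2^{8-2n^2}\,(1+O(u_2)).
\]

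Finally, Lemma~\ref{lem:expansion2} gives $x_P = u_2^{-2}(1+O(u_2^3))$, so $u_2^{8-2n^2}=x_P^{n^2-4}(1+O(x_P^{-1}))$, and therefore
\[
\psi_n(x_P) = \binom{n+1}{3}\,x_P^{n^2-4} + O(x_P^{n^2-5})
\]
as $x_P\to\infty$. Since $\psi_n(X)\in\Z[X]$ is a polynomial and this asymptotic holds for $x_P$ in an open dense set, we read off that $\deg\psi_n = n^2-4$ with leading coefficient $\tfrac{n(n-1)(n+1)}{6}=\binom{n+1}{3}$, as claimed.

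The main obstacle is sign bookkeeping: the appendix explicitly flags sign errors in the literature, and the argument chains together the $-\tfrac{1}{3}u_2^3$ in $\sigma$, the $-u_2^2$ in $\sigma_2(u)$, the $-u_2^{-5}$ in $y_P$, together with the global factor $(-1)^n$ and the $(-1)^{n^2}$ coming from $\sigma_2(u)^{n^2}$. The parity identity $n\equiv n^2\pmod 2$ is what ultimately makes all these signs cancel to give the positive leading coefficient $\binom{n+1}{3}$, so this small identity should be flagged as the crux of the sign analysis.
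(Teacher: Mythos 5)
The paper's own proof of this lemma is a one-line citation of Cantor's Theorem~8.17: the degree and leading coefficient are read off from Cantor's purely algebraic construction of the $\psi_n$ (via the recursions/determinantal expressions for the functions $f_r$), with no analytic input. Your proof instead derives the lemma from Theorem~\ref{thm:analytic_psi}, and within the logical architecture of this paper that is circular. The \emph{only} place Lemma~\ref{lem:Psi} is used is in Appendix~A, where it supplies the leading term \eqref{eq:LHS} of $2y_P\psi_n(x_P)$ that is needed to fix the sign $(-1)^n$ in Theorem~\ref{thm:analytic_psi}; before that comparison one only knows $2y_P\psi_n(x_P)=\pm\,\sigma(nu)/\sigma_2(u)^{n^2}$ from \^Onishi's determinant expression. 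So the global sign you invoke is exactly the piece of information that the paper extracts \emph{from} this lemma, not something available prior to it. Your sign bookkeeping (the cancellation via $n\equiv n^2\pmod 2$) is in fact the same computation as the paper's derivation of \eqref{eq:RHS}, run in the opposite direction; it cannot serve as a proof of the lemma without an independent proof of the signed identity in Theorem~\ref{thm:analytic_psi}.

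If you weaken your input to the unsigned statement $2y_P\psi_n(x_P)=\pm\,\sigma(nu)/\sigma_2(u)^{n^2}$, your asymptotic argument does legitimately recover the degree $n^2-4$ and the absolute value $\binom{n+1}{3}$ of the leading coefficient (for $n\ge 2$, so that $\binom{n+1}{3}\neq 0$), but not its sign --- and the sign is part of the assertion and is precisely what the appendix needs. Two smaller remarks: the error term should be $O\bigl(x_P^{\,n^2-9/2}\bigr)$ rather than $O\bigl(x_P^{\,n^2-5}\bigr)$, since the expansions are in $u_2$ and $x_P\sim u_2^{-2}$ (this is harmless, as $\psi_n$ is known to be a polynomial in $X$); and to make the argument non-circular one must prove the lemma from Cantor's algebraic definition, e.g.\ by induction through his recurrences, which is what the cited Theorem~8.17 of \cite{Cantor} does.
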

\begin{proof}
The lemma follows from \cite[Theorem~8.17]{Cantor}.
\end{proof}

\begin{proof}[Proof of Theorem~\ref{thm:analytic_psi}]
Comparing the definition of $\psi_n(X)$ and the determinant expression of $\sigma(nu)/\sigma_2(u)^{n^2}$ in \cite[Theorem~A~1]{Onishi}, we have
\[
	2 y_P \psi_n(x_P) = \pm \frac{\sigma(nu)}{\sigma_2(u)^{n^2}}.
\]
To determine the sign, we compare the leading term
of the Laurent expansion of both sides at $u_2=0$.
By Lemmas~\ref{lem:expansion2} and~\ref{lem:Psi}, we have
\begin{equation}\label{eq:LHS}
	2 y_P \psi_n(x_P) = -2 \binom{n+1}{3} \frac{1}{u_2^{2n^2-3}} + (d^\circ(u_2) \ge -2n^2+4).
\end{equation}
By Lemmas~\ref{lem:expansion1} and~\ref{lem:expansion2}, we have
\begin{align*}
	\sigma(nu) &= n u_1 + \frac{1}{6} a_2 (n u_1)^3 - \frac{1}{3} (n u_2)^3 + (d^\circ(u_1,u_2)\ge 5) \\
	&= \frac{1}{3} n u_2^3 + \frac{1}{6} a_2 \left(\frac{1}{3} n u_2^3\right)^3 - \frac{1}{3} n^3 u_2^3 + (d^\circ(u_2)\ge 4) \\
	&= -2 \binom{n+1}{3} u_2^3 + (d^\circ(u_2)\ge 4).
\end{align*}
By Lemma~\ref{lem:expansion2}, we have
\[
	\sigma_2(u)^{n^2} = (-1)^{n^2} u_2^{2n^2} + (d^\circ(u_2)\ge 2n^2+1).
\]
Since $(-1)^{n^2}=(-1)^n$, we have
\begin{equation}\label{eq:RHS}
	\frac{\sigma(nu)}{\sigma_2(u)^{n^2}} = 2 (-1)^{n+1} \binom{n+1}{3} \frac{1}{u_2^{2n^2-3}} + (d^\circ(u_2) \ge -2n^2+4).
\end{equation}
Therefore, by \eqref{eq:LHS} and \eqref{eq:RHS}, we obtain
\[
	2 y_P \psi_n(x_P) = (-1)^n \frac{\sigma(nu)}{\sigma_2(u)^{n^2}}. \qedhere
\]
\end{proof}

\section{Numerical calculation of periods and orders}

Here we give an example illustrating 
Theorem \ref{MainTheorem}.
We study the integer sequence introduced by Cantor (see OEIS A058231)\footnote{\url{https://oeis.org/A058231}}.
It is an integer sequence $\{c_n\}_{n\geq0}$ satisfying
\begin{align*}
	& c_0 = c_1 = 0, \quad c_2 = 1, \quad c_3 = 36, \quad c_4 = -16, \notag \\
	& c_5 = 5041728, \quad c_6 = -19631351040, \quad c_7 = -62024429150208, \notag \\
	& c_8 = -2805793044443561984, \quad c_9 = -1213280369793911777918976
 \end{align*}
and the recurrence relation of Somos 8 type
\begin{multline*}
   {-16} c_{n} c_{n+8} - 181502208 c_{n+1} c_{n+7}
	+ 235226865664 c_{n+2} c_{n+6} \notag \\
   + 25442230947840 c_{n+3} c_{n+5} + 314101616640 c_{n+4}^2 = 0. \label{eq:variant}
\end{multline*}

It is a non-trivial fact that
such an integer sequence $\{ c_n \}_{n \geq 0}$
exists.
In fact, this sequence consists of values of
Cantor's division polynomials; see also \cite{Cantor}.
We set
\[
	C \colon Y^2 = X^5 - 3 X^4 - 2 X + 9, \quad P = (0,3).
\]
Let $\psi_n(X) \in \Z[X]$ be Cantor's division polynomial for $C$.
Then we can verify 
\[
	c_n = \psi_n(0).
\]
We extend the sequence $c_n$ to $n<0$ by $c_n = -c_{-n}$
(see OEIS A058231). In particular, we have
$c_{-1} = c_0 = c_1 = 0$.

From Theorem \ref{MainTheorem} and
Corollary \ref{Corollary:MainTheorem},
we obtain the following results.

\begin{cor}
Let $p$ be a prime not in the following list:
\[
    2, 3, 5, 7, 29, 41, 47, 379, 509, 853, 8059, 8753, 49711, 140891.
\]
Then the following assertions hold.
\begin{enumerate}
\item The reduction modulo $p$ of the sequence $\seq{c} = \{ c_n \}_{n\in\mathbb{Z}}$ is periodic.
\item Let $\Per_p(\seq{c})$ be the period of
the reduction modulo $p$ of the sequence $\seq{c}$.
Let $\ord_p(D_P)$ be the order of the point $\overline{D_P} \in \Jac(C)(\F_p)$.
Then we have
    \[
		\ord_p(D_P) \mid \Per_p(\seq{c}) \mid (p-1) \ord_p(D_P).
    \]
\item We have $\Per_p(\seq{c}) \leq (p-1)(1 + \sqrt{p})^4$.
\end{enumerate}
\end{cor}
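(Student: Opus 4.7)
The plan is to apply Theorem~\ref{MainTheorem} and Corollary~\ref{Corollary:MainTheorem} directly to the curve $C\colon Y^2 = F(X) = X^5 - 3X^4 - 2X + 9$ and the integral point $P = (0, 3)$. First I would check that $F(X)$ defines a smooth hyperelliptic curve of genus $2$ (that is, $\mathrm{disc}(F) \neq 0$) and that $P \in C(\Q)$ is an integral point, which is immediate since $F(0) = 9 = 3^2$. The identification $c_n = \psi_n(0)$ with the values listed in the statement (and in OEIS A058231) is a finite check using Cantor's recursion for $\psi_n(X)$, so I would verify it for $n \leq 9$ and then observe that the Somos~$8$ recurrence displayed in the example coincides with the specialization of \eqref{eq:psi_Somos8} at $X = 0$, which pins down the entire sequence uniquely.

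Next, the hypotheses of Theorem~\ref{MainTheorem} require $p$ to be odd and to divide neither $\mathrm{disc}(F)$ nor the product $c_3 c_4 c_5 c_6 c_7 (c_4^3 - c_3^3 c_5)$. So the set of exceptional primes is exactly
\[
\{2\} \cup \{\,p \text{ prime} : p \mid N\,\}, \qquad N := \mathrm{disc}(F) \cdot c_3 c_4 c_5 c_6 c_7 \cdot (c_4^3 - c_3^3 c_5).
\]
The core task is therefore the explicit calculation: compute $\mathrm{disc}(F)$ and $c_4^3 - c_3^3 c_5$ from the data supplied, and factor each of the integers $\mathrm{disc}(F)$, $c_3, c_4, c_5, c_6, c_7$, $c_4^3 - c_3^3 c_5$. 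Collecting the resulting prime divisors and adjoining $2$ should reproduce exactly the displayed list $\{2, 3, 5, 7, 29, 41, 47, 379, 509, 853, 8059, 8753, 49711, 140891\}$. Assertions (1), (2), (3) of the corollary then follow verbatim from the corresponding parts of Theorem~\ref{MainTheorem} and Corollary~\ref{Corollary:MainTheorem}.

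The only real obstacle is computational: the integers $c_6$, $c_7$, and especially $c_4^3 - c_3^3 c_5$ (which is of the order of $10^{14}$) must be factored, and some of the prime factors that appear (for instance $49711$ and $140891$) are large enough that one would in practice rely on a computer algebra system to carry out the factorizations and trial divisions. Apart from this bookkeeping, no new mathematical argument is needed beyond what is already established in Theorem~\ref{MainTheorem} and Corollary~\ref{Corollary:MainTheorem}.
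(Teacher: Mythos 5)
Your proposal is correct and follows essentially the same route as the paper: both reduce the corollary to Theorem~\ref{MainTheorem} and Corollary~\ref{Corollary:MainTheorem} and then identify the excluded primes by factoring $\mathrm{disc}(F) = -36040475 = -5^2 \times 29 \times 49711$ together with $c_3, c_4, c_5, c_6, c_7$ and $c_4^3 - c_3^3 c_5$, whose prime divisors (together with $2$, which in fact already divides each $c_i$ listed) give exactly the displayed list.
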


\begin{proof}
By Theorem \ref{MainTheorem} and
Corollary \ref{Corollary:MainTheorem},
it is enough to determine the set of excluded primes.
The discriminant of $X^5 - 3 X^4 - 2 X + 9$ is $-36040475 = - 5^2 \times 29 \times 49711$.
(By Magma, the conductor of $C$ is $4613180800 = 2^7 \times 5^2 \times 29 \times 49711$.)
We calculate
    \begin{align*}
        c_3 &= 2^2 \times 3^2, \\
        c_4 &= -2^4, \\
        c_5 &= 2^6 \times 3^2 \times 8753,\\
        c_6 &= -2^8 \times 3 \times 5 \times 7 \times 41 \times 47 \times 379,\\
        c_7 &= - 2^{13} \times 3^2 \times 7 \times 853 \times 140891,\\
        c_4^3 - c_3^3 c_5 &=  - 2^{13} \times 7 \times 509 \times 8059.
    \end{align*}
\end{proof}

In the following table, for prime $p \leq 400$, we give numerical results on
the number of $\F_p$-rational points on the reduction modulo $p$
of $\Jac(C)$,
the order $\ord_p(D_P)$ of the point $\overline{D_P} \in \Jac(C)(\F_p)$,
the period $\Per_p(\seq{c})$ of the reduction modulo $p$ of the sequence $\seq{c}$,
the ratio $\Per_p(\seq{c})/\ord_p(D_P)$,
and the elements $\alpha_p, \beta_p \in \F_p$ in Theorem \ref{MainTheorem2}.

The calculations of $|\Jac(C)(\F_p)|$ and $\ord_p(D_P)$
are done by Magma \cite{Magma}.
The calculations of $\Per_p(\seq{c})$ are done by Sage \cite{Sage}
using the bilinear recurrence relations of Somos $8$, $9$, $10$ and $11$ type
satisfied by $\seq{c}$
in Corollary~\ref{cor:Somos8-11}.

\renewcommand{\arraystretch}{1.3}
\begin{longtable}{|r|c|c|c|c|c|c|}
\caption{Numerical verification of Theorem~\ref{MainTheorem} for the case of Cantor's sequence (OEIS A058231).}\\
\hline
$p$ & $|\Jac(C)(\F_p)|$ & $\ord_p(D_P)$ & $\Per_p(\seq{c})$ & $\Per_p(\seq{c})/\ord_p(D_P)$ & $\alpha_p$ & $\beta_p$ \\
\hline
2 & & & & & & \\
3 & 12 & 2 & 6 & 3 & & \\
5 & & & 12 & & & \\
7 & 28 & 7 & 21 & 3 & 4 & 2 \\
11 & 112 & 56 & 280 & 5 & 4 & 9 \\
13 & 127 & 127 & 762 & 6 & 10 & 7 \\
17 & 272 & 136 & 2176 & 16 & 10 & 4 \\
19 & 405 & 135 & 405 & 3 & 7 & 1 \\
23 & 692 & 173 & 3806 & 22 & 12 & 10 \\
29 & & & 2100 & & & \\
31 & 997 & 997 & 997 & 1 & 1 & 1 \\
37 & 1684 & 842 & 3368 & 4 & 6 & 31 \\
41 & 1693 & 1693 & 8465 & 5 & 10 & 37 \\
43 & 1186 & 1186 & 2372 & 2 & 42 & 1 \\
47 & 2433 & 2433 & 55959 & 23 & 18 & 17 \\
53 & 3284 & 821 & 10673 & 13 & 16 & 16 \\
59 & 3512 & 439 & 12731 & 29 & 45 & 19 \\
61 & 3910 & 3910 & 234600 & 60 & 26 & 40 \\
67 & 5056 & 632 & 41712 & 66 & 6 & 2 \\
71 & 5064 & 2532 & 88620 & 35 & 10 & 36 \\
73 & 5840 & 730 & 13140 & 18 & 37 & 57 \\
79 & 5825 & 5825 & 75725 & 13 & 18 & 52 \\
83 & 7324 & 3662 & 150142 & 41 & 78 & 77 \\
89 & 6762 & 2254 & 198352 & 88 & 60 & 75 \\
97 & 9884 & 9884 & 948864 & 96 & 90 & 2 \\
101 & 9900 & 275 & 13750 & 50 & 82 & 10 \\
103 & 10112 & 5056 & 10112 & 2 & 102 & 1 \\
107 & 12944 & 3236 & 343016 & 106 & 46 & 81 \\
109 & 11349 & 11349 & 306423 & 27 & 3 & 45 \\
113 & 12332 & 12332 & 1381184 & 112 & 12 & 41 \\
127 & 15272 & 15272 & 30544 & 2 & 126 & 1 \\
131 & 18724 & 9362 & 243412 & 26 & 45 & 86 \\
137 & 19104 & 9552 & 1299072 & 136 & 21 & 15 \\
139 & 20687 & 20687 & 2854806 & 138 & 71 & 72 \\
149 & 20696 & 5174 & 382876 & 74 & 37 & 64 \\
151 & 22010 & 22010 & 3301500 & 150 & 51 & 2 \\
157 & 27456 & 2288 & 118976 & 52 & 29 & 156 \\
163 & 26138 & 26138 & 4234356 & 162 & 137 & 122 \\
167 & 30036 & 7509 & 1246494 & 166 & 19 & 30 \\
173 & 26673 & 26673 & 2293878 & 86 & 54 & 62 \\
179 & 32388 & 2699 & 480422 & 178 & 60 & 132 \\
181 & 35447 & 35447 & 638046 & 18 & 138 & 149 \\
191 & 38384 & 19192 & 3646480 & 190 & 28 & 163 \\
193 & 37210 & 37210 & 7144320 & 192 & 114 & 120 \\
197 & 34920 & 4365 & 427770 & 98 & 61 & 22 \\
199 & 41888 & 10472 & 1036728 & 99 & 65 & 180 \\
211 & 45849 & 15283 & 229245 & 15 & 134 & 137 \\
223 & 49121 & 49121 & 5452431 & 111 & 9 & 126 \\
227 & 56510 & 28255 & 6385630 & 226 & 33 & 162 \\
229 & 54829 & 54829 & 6250506 & 114 & 3 & 62 \\
233 & 53520 & 4460 & 1034720 & 232 & 212 & 207 \\
239 & 56584 & 7073 & 1683374 & 238 & 202 & 207 \\
241 & 66112 & 33056 & 793344 & 24 & 32 & 226 \\
251 & 64724 & 32362 & 1618100 & 50 & 226 & 204 \\
257 & 63176 & 31588 & 4043264 & 128 & 143 & 165 \\
263 & 70608 & 35304 & 9249648 & 262 & 258 & 189 \\
269 & 71024 & 8878 & 1189652 & 134 & 170 & 24 \\
271 & 73020 & 4868 & 262872 & 54 & 266 & 188 \\
277 & 74418 & 24806 & 6846456 & 276 & 24 & 115 \\
281 & 80956 & 80956 & 22667680 & 280 & 259 & 267 \\
283 & 80436 & 6703 & 1890246 & 282 & 81 & 272 \\
293 & 84592 & 21148 & 3087608 & 146 & 172 & 267 \\
307 & 94816 & 47408 & 4835616 & 102 & 155 & 51 \\
311 & 105052 & 52526 & 16283060 & 310 & 289 & 124 \\
313 & 97720 & 24430 & 635180 & 26 & 255 & 265 \\
317 & 108842 & 108842 & 34394072 & 316 & 126 & 115 \\
331 & 102800 & 25700 & 1413500 & 55 & 172 & 274 \\
337 & 116852 & 29213 & 2453892 & 84 & 196 & 147 \\
347 & 125596 & 31399 & 10864054 & 346 & 38 & 280 \\
349 & 113967 & 5427 & 314766 & 58 & 110 & 115 \\
353 & 125906 & 62953 & 5539864 & 88 & 336 & 317 \\
359 & 129600 & 64800 & 23198400 & 358 & 105 & 254 \\
367 & 136161 & 45387 & 16611642 & 366 & 268 & 360 \\
373 & 146336 & 4573 & 283526 & 62 & 31 & 97 \\
379 & 143613 & 143613 & 54285714 & 378 & 189 & 293 \\
383 & 153214 & 76607 & 29263874 & 382 & 64 & 157 \\
389 & 160166 & 80083 & 15536102 & 194 & 311 & 355 \\
397 & 165192 & 6883 & 1362834 & 198 & 121 & 119 \\
\hline
\end{longtable}

\begin{rem}
Among the primes $p \leq 400$, for $p \neq 2, 3, 5, 7, 29, 41, 47, 379$, we have
    \[
		\ord_p(D_P) \mid \Per_p(\seq{c}) \mid (p-1) \ord_p(D_P)
    \]
by Theorem \ref{MainTheorem}.
For the excluded primes, the curve $C$ has bad reduction at
$p = 2, 5, 29$.
For $p = 7, 41, 47, 379$,
although we cannot apply Theorem \ref{MainTheorem} because
$p$ divides $c_3 c_4 c_5 c_6 c_7 (c_4^3 - c_3^3 c_5)$,
we observe that the above divisibilities hold for such $p$.
However, for $p=3$, we observe that the divisibility
$\ord_p(D_P) \mid \Per_p(\seq{c})$
holds, but the divisibility
$\Per_p(\seq{c}) \mid (p-1) \ord_p(D_P)$
does not.
\end{rem}

\begin{rem}\label{rem:d-statistic}
For primes $\leq 400$,
we have $\Per_p(\seq{c}) = \ord_p(D_P)$
for $p = 31$ only.
We have $\Per_p(\seq{c}) = (p-1) \ord_p(D_P)$
for $p = 17$, $23$, $61$, $67$, $89$, $97$, $107$, $113$, $137$, $139$, $151$, $163$, $167$, $179$, $191$, $193$, $227$, $233$, $239$, $263$, $277$, $281$, $283$, $311$, $317$, $347$, $359$, $367$, $379$, $383$.
\end{rem}

\subsection*{Acknowledgements}

The authors would like to thank the referee for useful comments and suggestions.
The work of Y.\ I.\ was supported by JSPS KAKENHI Grant Number 21K18577, 24K21512 and 21K13773.
The work of T.\ I.\ was supported by JSPS KAKENHI Grant Number 21K18577, 24K21512 and 23K20786.
The work of T.\ O.\ was supported by JSPS KAKENHI Grant Number 18H05233, 20K14295, 21K18577 and 24K21512.
The work of T.\ T.\ was supported by JSPS KAKENHI Grant Number 21K18577, 24K21512 and 22H01115.
The work of Y.\ U.\ was supported by JSPS KAKENHI Grant Number 21K18577, 24K21512 and 20K03517.
A part of this work was done while the authors were supported by the Sumitomo Foundation FY2018 Grant for Basic Science Research Projects (Grant Number 180044).
Most of calculations were done with the aid of  the computer algebra systems Magma \cite{Magma} and Sage \cite{Sage}.

\end{document}